\def\rr{\mathbb R}
\newtheorem{theoreme}{Theorem}[section]
\newtheorem{proposition}[theoreme]{Proposition}
\newtheorem{corollary}[theoreme]{Corollary}
\newtheorem{rem}[theoreme]{Remark}
\newtheorem{3.10}[theoreme]{``Theorem"}
\newtheorem{definition}[theoreme]{Definition}
\theoremstyle{remark}
\numberwithin{equation}{section}
\begin{document}

\date{\today}


\title[Uniform  synchronization] {Uniform  synchronization  of an abstract  linear  second order evolution system}

\maketitle

\centerline {Tatsien Li\footnote {\em School of Mathematical Sciences, Fudan University, Shanghai 200433, China;  Shanghai Key Laboratory for Contemporary Applied Mathematic;  Nonlinear Mathematical Modeling and Methods Laboratory, dqli@fudan.edu.cn.} \quad Bopeng  Rao\footnote{\em Corresponding author. Institut de Recherche Math\'ematique Avanc\'ee, Universit\'e de Strasbourg, 67084  Strasbourg, France, bopeng.rao@math.unistra.fr}}

\begin{abstract}   Although the mathematical study on the synchronization of wave equations at  finite horizon has been  well developed, there was few results on the   synchronization  of wave equations  for long-time horizon.  The aim of the paper is to investigate the  uniform synchronization at  the infinite  horizon for
one abstract linear second order evolution system in a Hilbert space.

First, using the classical compact perturbation theory on the uniform stability of semigroups of contractions, 
we will establish a lower bound on the number of 
damping, necessary for the uniform synchronization of  the considered  system.   Then, under the minimum number of damping,  we 
clarify the algebraic structure of the system as well as 
the necessity of the conditions of compatibility on the coupling matrices.  We then establish  the uniform synchronization by the compact perturbation  method and  then give   the  dynamics  of the  asymptotic orbit.  Various  applications are given for  the system of wave equations with boundary  feedback or (and) locally distributed feedback,  and for  the system of  Kirchhoff plate with  distributed feedback. 
Some  open questions  are raised  at the end of the paper for future development.  

The study is based on the synchronization theory and the compact perturbation of semigroups.  
\end{abstract}

\bigskip

{\bf Keywords:}  uniform synchronization,   condition of compatibility, second  order evolution system. 

\bigskip 

{\bf MS Classification 2010}\quad  93B05,  93C20, 35L53

\vskip 1cm


\section{Introduction} \label{s1}
Synchronization is a widespread natural phenomenon. It was first observed by Huygens in 1665 \cite{Huygens}. The theoretical research on synchronization from the  mathematical point of view dates back to Wiener in 1950s  in \cite{wiener} (Chapter 10). Since 2012,  Li and Rao started the research on the synchronization  in a finite time for a coupled system  of wave equations  with  Dirichele   boundary controls \cite{Raofr, {Rao3}}).  Later,   the  synchronization has been carried out for a coupled system  of wave equations with various boundary controls, the most part of their results was  recently collected in the monograph  \cite{book}.  The optimal control for the exact synchronization of parabolic system was   recently investigated by Wang and Yan  in \cite{wang}.
Consequently, the study of synchronization becomes a part of research in control theory. 

In a recent work \cite{rao11, rao2},   we showed that   under   Kalman's rank condition, the observability of a scalar equation  implies  the uniqueness  of solution  to a system  of elliptic operators. Using  this result,  we   have established  the asymptotic synchronization by groups for 
 second order evolution systems.  

The objective of this work  is to investigate the uniform  synchronization for second order evolution systems.  Let us briefly describe the  formulation and the main ideas. 

Let $ H$ and $ V $ be  two separated  Hilbert spaces such that
$ V \subset  H \subset V', $ $V'$ being the dual of $V$, 
with dense and compact imbeddings.  Let $L$
be the duality mapping   from  $ V $ onto $ V '$, 
and $ g $ be  a linear  continuous symmetric  operator from $  V $ into $  V '$.
Let $I$  denote the identity of $\mathbb R^N$.
We define the diagonal operators
\begin{equation}
\mathcal  L = LI \quad\hbox{and}  \quad\mathcal G =  g   I.\end{equation}
Let  $A$ and $D$ be symmetric and semi-positive definite matrices with constant elements. Consider  the following second order evolution system for the state variable  $U= (u^{(1)},  \cdots ,u^{(N)})^T $:
\begin{equation}
U'' + \mathcal L U    +AU   + D\mathcal GU'  =  0,\label{1.2}\end{equation}
where $`` \ ' \  "$ stands for  the time derivative. 

We first show that if system  \eqref{1.2}  is uniformly stable in the space  $(V \times  H)^N$, then rank$(D) =N$  (see Corollary  \ref{2.10} below).
When  rank$(D)<N$,  system   \eqref{1.2}  is not  uniformly  stable, we then turn to  consider the synchronization. 

For any given integer $p\geqslant  1$, let  \begin{equation}0=n_0< n_1<n_2<\cdots<n_p=N\end{equation}
be  integers such that  $n_{r}-n_{r-1}\geqslant 2 $ for all $r$ with $1\leqslant  r\leqslant  p.$
We
re-arrange the components of the state variable $U$ into $p$ groups
 \begin{equation}(u^{(1)},  \cdots,  u^{(n_1)}),
\ (u^{(n_1+1)}, \cdots, u^{(n_2)}), \cdots\cdots, (u^{(n_{p-1}+1)},
\cdots, u^{(n_p)}).\end{equation}

\begin{definition}   System   \eqref{1.2} is  uniformly  (exponentially) synchronizable by $p$-groups,  if there exist  constants $M\geqslant  1$ and $ \omega>0$,  such that for any given initial data $(U_0, U_1)\in (V \times  H)^N$, the corresponding solution $U$ to system  \eqref{1.2} satisfies
 \begin{align}&\|(u^{(k)}(t)-u^{(l)}(t), {u^{(k)}}'(t)-{u^{(l)}}'(t))\|_{ V \times  H}\label{1.3} \\ \leqslant  &Me^{-\omega t}\|(u^{(k)}_0-u^{(l)}_0, u_1^{(k)}-u_1^{(l)})\|_{ V \times  H},\quad t\geqslant 0\notag
\end{align}
for all $k, l$ with $n_{r-1}+1\leqslant   k, l\leqslant    n_r$ and  all $r$ with $1\leqslant  r\leqslant  p.$
\end{definition}

Now  let us outline the main ideas  in the study of the  uniform synchronization  by $p$-groups.  

Let  $C_p$ be the matrix given  by  \eqref{3.1} below.   Then \eqref{1.3} can be equivalently rewritten as 
\begin{equation}\|C_p(U(t), U'(t))\|_{(  V \times H )^{N-p} }\leqslant   Me^{-\omega t}\|C_p(U_0, U_1)\|_{(  V \times H )^{N-p} }\quad t\geqslant 0.\label{1.4}
 \end{equation}

The matrix $A$ satisfies  the condition of $C_p$-compatibility, if there exists a  symmetric and semi-positive definite matrix $\overline A_p$  such that 
 \begin{align}(C_pC_p^T )^{-1/2}C_pA = \overline A_p(C_pC_p^T )^{-1/2}C_p. \label {1.4n}\end{align} 
Correspondingly,  the reduced matrix $\overline D_p$  can be introduced for $D$ (see Proposition \ref{3.4th}).  Applying $(C_pC_p^T )^{-1/2}C_p$ to \eqref{1.2} and setting  $W= (C_pC_p^T )^{-1/2}C_pU$,  
we   get  a self-closed reduced  system   
\begin{equation}
W''+\mathcal L W +  \overline A_pW    + \overline D_p\mathcal GW'    =  0.
\label{1.5}
\end{equation}
It is clear  that the   uniform  synchronization  by $p$-groups of   system   \eqref{1.2} is equivalent to  the  uniform stability of the reduced system   \eqref{1.5}.     

In Theorem \ref{3.12}, we will show  that under  the condition $\hbox{rank}(D)=N-p$,  if the  scalar equation \begin{equation}
u'' + L u +  g  u' =  0
\label{1.6}
\end{equation}
is uniformly stable in  the space $ V \times  H$, then  system    (\ref{1.2}) is uniformly synchronizable  by $p$-groups.  

Furthermore (see Theorem \ref{3.16}), there exist some functions $u_1,\cdots, u_p$,    
such that
 \begin{align}&\|(u^{(k)}(t)-u_r(t), {u^{(k)}}'(t)-u_r'(t))\|_{ V \times  H}\label{1.7}\\\leqslant  & Me^{-\omega t}
\|(u^{(k)}_0-u^{(l)}_0, u_1^{(k)}-u_1^{(l)})\|_{ V \times  H}, \quad t\geqslant 0
\notag\end{align}
 for all $k, l$ with $n_{r-1}+1\leqslant   k, l\leqslant    n_r$  and  all $r$ with $1\leqslant   r\leqslant   p.$

Moreover,  the functions $u_1,\cdots, u_p$ satisfy  a homogeneous system,
then,   the solution $U$ to system \eqref{1.2} follows a conservative  orbit.  This is quite  different from the approximate boundary synchronization by $p$-groups, since  the approximate boundary  synchronization by $p$-groups in the consensus  sense does not imply that  in the pinning sense in general  (see Chapter 11 in \cite{book}). 

The above approach  is direct and efficient.  The  difficult part of the problem is to show the necessity  of the conditions of  $C_p$-compatibility which are imposed as  physically reasonable hypotheses even for the systems   of ordinary differential equations.  So, we  have to first justify   the necessity  of the conditions of compatibility, 
then, the uniform synchronization  will be studied by means of a serious  mathematical consideration. 

The necessity of the condition of $C_p$-compatibility for $A$, respectively $D$   is  intrinsically  linked with  the rank of the matrix $D$.  We will show (see Proposition \ref{3.10th}) that  $\hbox{rank}(D)\geqslant N-p$ is a necessary condition for the uniform  synchronization by $p$-groups. Then under the minimum rank condition $\hbox{rank}(D)=N-p$, we establish the necessity of the condition of $C_p$-compatibility  for the matrix $A$, respectively $D$ (see Theorem \ref{3.12}).  

Now we  give some  related literatures.  One of the motivation of studying  the synchronization  consists  of establishing the controllability for fewer boundary controls. When the number of  boundary controls  is fewer than the number of state variables, the non-exact  boundary controllability for a coupled system of wave equations with  various  boundary controls in the usual energy space was established in Li and Rao \cite{book}.  However, if  the components of initial data are allowed to  have different levels of energy, then  the exact boundary  controllability for a system of two wave equations   was established  by means of only one  boundary control  in Alabau-Boussouira \cite{Ala1, Ala2}, Liu and Rao \cite{Liu23}, Rosier and de Teresa \cite{Rosier25}.  In \cite{Dem5}, Dehman established the controllability of two coupled wave equations on a compact manifold with only one local distributed control.  In \cite{zuazua24, zuazua3},  Zuazua proposed the average controllability  as  another way to deal with the controllability with fewer controls. The observability inequality is particularly interesting for a trial on the decay rate of  approximate  controllability.   

The paper is organized as follows.   In \S \ref{s2}, we consider  the uniform  stability and  establish a lower bound on the rank of the control matrix, which is 
necessary for the study of the uniform  synchronization.   \S\ref{s3}  is devoted to  the uniform 
synchronization  by $p$-groups.  Under the minimum  rank condition, we show the necessity of the conditions of $C_p$-compatibility for the coupling matrices in the considered system.  
In \S\ref{s4},  we give some examples of applications such as the system  of wave equations with boundary  feedback or (and) locally distributed feedback,  and the system    of  Kirchhoff plate with  distributed feedback.  In \S\ref{s5}, we give some comments on the obtained results and propose  some 
open questions for future development.  

\vskip 1cm

\section {Uniform  stability} \label{s2}
We first recall the following well-posedness  result (see Proposition 3.1 in \cite{rao2}). 
\begin{proposition}  \label{2.0}  System \eqref{1.2}
generates a semi-group of  contractions  with a compact resolvent in the space $(V\times H)^N$. More precisely, for any given initial data $(U_0, U_1)\in (V\times H)^N$, the  corresponding weak solution $U$  to system \eqref{1.2} satisfies 
\begin{equation} U\in C^0(\mathbb R^+, V^N)\cap C^1(\mathbb R^+, H^N)\end{equation} 
and 
\begin{equation}\|(U(t), U'(t))\|_{(V\times H)^N}\leqslant \|(U_0, U_1)\|_{(V\times H)^N},  \quad t\geqslant 0.\end{equation}

\end{proposition}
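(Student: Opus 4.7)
The plan is to recast \eqref{1.2} as a first order Cauchy problem in the energy space $\mathcal H = (V\times H)^N$ and to apply the Lumer--Phillips theorem. Set $\mathcal U = (U, U')^T$; then \eqref{1.2} becomes $\mathcal U' = \mathcal A\mathcal U$, where
\begin{equation*}
\mathcal A(U, V) = \bigl(V,\; -\mathcal L U - AU - D\mathcal G V\bigr),
\end{equation*}
with domain
\begin{equation*}
D(\mathcal A) = \bigl\{(U, V)\in V^N\times V^N \;:\; \mathcal L U + D\mathcal G V \in H^N\bigr\},
\end{equation*}
since $A$ has constant entries and $AU\in V^N\subset H^N$ automatically. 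Equip $\mathcal H$ with the inner product induced by $L$ in the first slot and by the $H$-inner product in the second (so that $\|(U,V)\|_{\mathcal H}^2 = \langle \mathcal L U, U\rangle + (AU,U)_{H^N} + \|V\|_{H^N}^2$), which is equivalent to the standard one because $A$ is symmetric positive semidefinite and $L$ is the duality map.

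First I would verify dissipativity. For $(U,V)\in D(\mathcal A)$, a direct computation using the identities $\langle \mathcal L U, V\rangle_{(V')^N, V^N} = (U,V)_{V^N}$ and the symmetry of $A$ gives
\begin{equation*}
\bigl(\mathcal A(U,V),\,(U,V)\bigr)_{\mathcal H} = -\langle D\mathcal G V, V\rangle \leqslant 0,
\end{equation*}
where the sign follows because $D$ is symmetric positive semidefinite (diagonalize $D=P\Lambda P^T$ and apply to the components of $V$) and $g$ is a positive symmetric operator on $V$. Next I would establish maximality: given $(F,G)\in\mathcal H$, I want $(U,V)\in D(\mathcal A)$ with $(I-\mathcal A)(U,V) = (F,G)$. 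Eliminating $V = U - F$ reduces this to the elliptic problem
\begin{equation*}
(I + \mathcal L + A + D\mathcal G)\,U = G + F + D\mathcal G F \quad \text{in } (V')^N,
\end{equation*}
whose bilinear form on $V^N$,
\begin{equation*}
a(U,\tilde U) = (U,\tilde U)_{H^N} + (U,\tilde U)_{V^N} + (AU,\tilde U)_{H^N} + \langle D\mathcal G U, \tilde U\rangle,
\end{equation*}
is continuous and coercive (the $V^N$-part alone provides coercivity, the remaining terms being nonnegative). Lax--Milgram supplies a unique $U\in V^N$; then $V = U-F\in V^N$ and the equation itself forces $\mathcal L U + AU + D\mathcal G V = G - V\in H^N$, so $(U,V)\in D(\mathcal A)$. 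Thus $\mathcal A$ is m-dissipative, and Lumer--Phillips yields a $C_0$-semigroup of contractions $\{e^{t\mathcal A}\}_{t\geqslant 0}$ on $\mathcal H$, which immediately gives both the stated regularity $U\in C^0(\mathbb R^+, V^N)\cap C^1(\mathbb R^+, H^N)$ and the contraction estimate.

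Finally, for the compactness of the resolvent, note that $D(\mathcal A)$ embeds continuously into $V^N\times V^N$, and the compact inclusion $V\hookrightarrow H$ gives a compact embedding $V^N\times V^N\hookrightarrow V^N\times H^N = \mathcal H$. Composing with the bounded map $(I-\mathcal A)^{-1}\colon \mathcal H\to D(\mathcal A)$ yields a compact operator on $\mathcal H$, as required.

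The only delicate point is that the damping operator $D\mathcal G$ appears in the regularity condition defining $D(\mathcal A)$, so one must be careful that Lax--Milgram actually produces an element of the domain and not merely of $V^N$; this is handled by reading off the $H^N$-regularity of $\mathcal L U + D\mathcal G V$ directly from the resolvent equation $V + \mathcal L U + AU + D\mathcal G V = G$. Everything else is a routine application of semigroup theory.
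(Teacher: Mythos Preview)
The paper does not actually prove this proposition: it simply cites Proposition~3.1 of \cite{rao2}. Your Lumer--Phillips approach is the natural one, and the dissipativity and maximality steps are correct (modulo the implicit hypothesis that $g\geqslant 0$, which the paper uses freely but never states).

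There is, however, a genuine gap in your compact-resolvent argument. You write that the compact inclusion $V\hookrightarrow H$ ``gives a compact embedding $V^N\times V^N\hookrightarrow V^N\times H^N=\mathcal H$'', but this is false: the first factor is the identity $V^N\to V^N$, which is not compact, and the product of a non-compact map with a compact one is not compact. Indeed, under only the hypotheses stated in \S\ref{s1} (that $g:V\to V'$ is continuous and symmetric), the resolvent need \emph{not} be compact. Take $N=1$, $A=0$, $D=1$, $g=L$, and let $Le_n=\lambda_n e_n$ with $\lambda_n\to\infty$; setting $F_n=e_n/\|e_n\|_V$ and $G_n=0$, the solution of $(I-\mathcal A)(U_n,V_n)=(F_n,0)$ satisfies $U_n=\dfrac{1+\lambda_n}{1+2\lambda_n}\,\dfrac{e_n}{\|e_n\|_V}$, so $\|U_n\|_V\to\tfrac12$ while $(U_n)$ has no $V$-convergent subsequence.

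What is missing is an extra hypothesis on $g$---for instance that $g:V\to V'$ is compact, or that $g$ maps $V$ into $H$---which holds in every application of \S\ref{s4} and is presumably assumed in \cite{rao2}. With this in hand the argument is easy: from your resolvent equation $U=(I+\mathcal L+A+D\mathcal G)^{-1}(G+F+D\mathcal G F)$, each of $G$, $F$, $D\mathcal G F$ ranges in a relatively compact subset of $(V')^N$ when $(F,G)$ stays bounded in $V^N\times H^N$ (use $H\hookrightarrow V'$ compactly for the first two and compactness of $g$ for the third), and the bounded inverse $(I+\mathcal L+A+D\mathcal G)^{-1}:(V')^N\to V^N$ then gives relative compactness of $U$ in $V^N$; relative compactness of $V=U-F$ in $H^N$ follows from $V\hookrightarrow H$ compactly since $V$ stays bounded in $V^N$.
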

\begin{definition}   System   \eqref{1.2}  is  uniformly (exponentially) stable  in  the space $ ( V \times  H)^{N}$,  if there exist  constants $M\geqslant  1$ and $\omega>0$, such that for any given initial data $(U_0, U_1)\in ( V  \times  H)^{N}$, the corresponding solution $U$ to system   \eqref{1.2}  satisfies 
 \begin{equation} \|(U(t), U'(t))\|_{( V  \times  H)^{N}}\leqslant
  Me^{-\omega t}\|(U_0, U_1)\|_{( V  \times  H)^{N}},  \quad t\geqslant 0.\label{2.1}\end{equation}\end{definition}
\begin{proposition} \label{2.2} 
Let   $\mathcal R$ be a linear compact  mapping
from $ V $ to $L^2(0, T;  H)$. Then 
we can not find positive constants  $M\geqslant 1$ and $\omega>0$, such that for all $\theta\in  V $, the solution to the following problem \begin{equation} \begin{cases}u ''+ L u=\mathcal R\theta,\\
t=0:\quad u= \theta,\  u'=
0\label{2.3} \end{cases}\end{equation}
satisfies  
 \begin{equation}\|(u(t), u'(t))\|_{ V  \times  H}
 \leqslant   Me^{-\omega t} \|\theta\|_{ V },\quad t \geqslant 0.
\label{2.4}\end{equation}
\end{proposition}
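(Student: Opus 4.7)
The plan is to argue by contradiction, combining the energy conservation of the unperturbed wave-type equation with the compactness of the perturbation operator $\mathcal R$.

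First, I would split $u=u_0+u_1$, where $u_0$ solves the conservative problem
\begin{equation*}
u_0''+Lu_0=0,\quad u_0(0)=\theta,\ u_0'(0)=0,
\end{equation*}
and $u_1$ solves
\begin{equation*}
u_1''+Lu_1=\mathcal R\theta,\quad u_1(0)=0,\ u_1'(0)=0.
\end{equation*}
Because $L$ is the duality mapping, the energy of $u_0$ is conserved: $\|(u_0(t),u_0'(t))\|_{V\times H}=\|\theta\|_V$ for all $t\geqslant 0$. (If $T_0>T$ later, I extend $\mathcal R\theta$ by zero beyond $T$ so the equation makes sense on $\mathbb R^+$; this extension remains continuous and hence the compactness of $\mathcal R$ is preserved.)

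Suppose, for contradiction, that \eqref{2.4} held for some $M\geqslant 1$ and $\omega>0$. Choose $T_0>0$ with $Me^{-\omega T_0}\leqslant 1/2$ and evaluate at $t=T_0$. By the triangle inequality,
\begin{equation*}
\|\theta\|_V=\|(u_0(T_0),u_0'(T_0))\|_{V\times H}\leqslant \|(u(T_0),u'(T_0))\|_{V\times H}+\|(u_1(T_0),u_1'(T_0))\|_{V\times H},
\end{equation*}
and the assumed decay \eqref{2.4} yields
\begin{equation*}
\|(u_1(T_0),u_1'(T_0))\|_{V\times H}\geqslant \tfrac{1}{2}\|\theta\|_V.
\end{equation*}
So the operator $K\colon V\to V\times H$ defined by $K\theta=(u_1(T_0),u_1'(T_0))$ is bounded below.

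Finally, I would observe that $K$ is compact, being the composition of the compact map $\theta\mapsto\mathcal R\theta$ (from $V$ into $L^2(0,T_0;H)$, via the zero extension if needed) with the bounded solution map $f\mapsto(v(T_0),v'(T_0))$ from $L^2(0,T_0;H)$ into $V\times H$ for $v''+Lv=f$ with zero Cauchy data (this latter boundedness follows from Proposition \ref{2.0} applied to a single equation). A compact operator that is bounded below must have finite-dimensional range, since it induces a topological isomorphism onto a closed subspace whose closed unit ball is compact. This contradicts the fact that $V$ is infinite-dimensional (a consequence of the compact dense embedding $V\subset H$, used in the nontrivial setting). The main obstacle I anticipate is simply verifying the continuous dependence of $u_1$ on the source in the correct norm and handling the extension of $\mathcal R\theta$ past time $T$ cleanly; everything else is a standard compact perturbation argument.
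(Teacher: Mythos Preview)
Your argument is correct and follows the same overall strategy as the paper: argue by contradiction, exploit the conservativity of $u''+Lu=0$, and conclude that a compact operator is bounded below on the infinite-dimensional space $V$. The paper packages this a little differently: rather than decomposing $u=u_0+u_1$ and using energy conservation for $u_0$, it invokes time-reversibility (backward well-posedness) of \eqref{2.3} to write
\[
\|\theta\|_V \leqslant \|(u(T),u'(T))\|_{V\times H} + \int_0^T\|\mathcal R\theta\|_H\,dt
\]
directly, and then concludes that $\mathcal R$ itself is bounded below. This spares you the extra step of verifying that the source-to-state map $f\mapsto(v(T_0),v'(T_0))$ is bounded from $L^2(0,T_0;H)$ to $V\times H$ (your citation of Proposition~\ref{2.0} is slightly off here, since that result treats the homogeneous damped system rather than the forced undamped scalar equation, though the estimate you need is of course standard). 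Apart from this bookkeeping, the two proofs are interchangeable.
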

\begin{proof}  
Noting that problem \eqref{2.3} is time invertible,  by well-posedness we have
\begin{equation}\|\theta\|_{ V }\leqslant   \|u(T)\|_{ V }+ \|u'(T)\|_{ H}
+ \int_0^T \|\mathcal R\theta\|_{ H}dt.\label{2.5} \end{equation}
Assume by contradiction that \eqref{2.4} holds for all $\theta \in  V$,  then we have \begin{equation} \|\theta\|_{ V }\leqslant   Me^{-\omega T}\|\theta\|_{ V } +\int_0^T \|\mathcal R\theta\|_{ H}dt.\end{equation}
When $T$ is   large enough, it follows that for all $\theta \in  V$,  we have
\begin{equation}\|\theta\|_{ V }\leqslant   \frac{\sqrt T} {1-Me^{-\omega T}}\|\mathcal R\theta\|_{L^2(0, T;  H)}.\end{equation}
This contradicts the compactness of $\mathcal R$.  The proof is  complete.
\end{proof}

\begin{theoreme} \label{2.5th}  
Let $\widetilde C_q$ be a full row-rank matrix of order $(N-q)\times N$ with
$0\leqslant q<N$.  Assume that  there exist  constants $M\geqslant  1$ and $ \omega>0$,  such that for any given initial data $(U_0, U_1)\in (V \times  H)^{N}$, the corresponding solution $U$ to  system   \eqref{1.2} satisfies 
 \begin{equation}\|\widetilde C_q(U(t), U'(t))\|_{( V  \times  H)^{N-q}}\leqslant
  Me^{-\omega t}\|(U_0, U_1)\|_{( V  \times  H)^{N}},  \quad t\geqslant 0.\label{2.6} \end{equation}
  Then
   \begin{equation}rank(\widetilde C_q D)\geqslant N-q.
   \label{2.7} \end{equation}
\end{theoreme}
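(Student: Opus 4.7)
The plan is to argue by contradiction via Proposition \ref{2.2}. Assume, on the contrary, that $\mathrm{rank}(\widetilde C_q D) < N-q$. Since $\widetilde C_q D$ has $N-q$ rows, there exists a nonzero $\eta \in \mathbb R^{N-q}$ such that $\eta^T \widetilde C_q D = 0$. Setting $\xi := \widetilde C_q^T \eta \in \mathbb R^N$, the full row-rank of $\widetilde C_q$ makes $\widetilde C_q^T$ injective, so $\xi \neq 0$; and by the symmetry of $D$ one has $D\xi = (\eta^T \widetilde C_q D)^T = 0$.

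For any $\theta \in V$, take the initial data $U_0 = \theta\xi$ and $U_1 = 0$, and let $U$ denote the corresponding solution of \eqref{1.2}. Since $\mathcal L = LI$ and $\mathcal G = gI$ commute with left multiplication by $\eta^T \widetilde C_q$, and since $\eta^T \widetilde C_q D = 0$ eliminates the damping term, the scalar function $u(t) := \eta^T \widetilde C_q U(t)$ satisfies $u'' + L u = -\eta^T \widetilde C_q A U$, with $u(0) = \eta^T \widetilde C_q \widetilde C_q^T \eta \, \theta = |\xi|^2 \theta$ and $u'(0) = 0$. After rescaling $\theta$ by $|\xi|^{-2}$, this fits the framework of Proposition \ref{2.2} with $\mathcal R \theta := -|\xi|^{-2} \eta^T \widetilde C_q A U$.

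The main obstacle is to verify the compactness of $\mathcal R$ from $V$ into $L^2(0, T; H)$. By Proposition \ref{2.0}, the linear map $\theta \mapsto U$ sends bounded sets of $V$ into bounded sets of $L^\infty(0,T; V^N)$, with derivative $U'$ bounded in $L^\infty(0,T; H^N)$. The compact embedding $V \hookrightarrow H$ combined with the Aubin--Lions lemma then yields compactness of $\theta \mapsto U$ from $V$ into $L^2(0, T; H^N)$. Composition with the constant matrix $A$ and the row vector $\eta^T \widetilde C_q$ is continuous, so $\mathcal R$ is compact.

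It remains to translate the hypothesis \eqref{2.6} into uniform exponential decay for the scalar variable $u$. Since $\|(U_0, U_1)\|_{(V \times H)^N} = |\xi|\, \|\theta\|_V$ and $\|(u(t), u'(t))\|_{V \times H} \leq |\eta| \, \|\widetilde C_q (U(t), U'(t))\|_{(V \times H)^{N-q}}$, the bound \eqref{2.6} yields, for some constant $M' > 0$, the estimate $\|(u(t), u'(t))\|_{V \times H} \leq M' e^{-\omega t} \|\theta\|_V$ for all $t \geq 0$, in direct contradiction with Proposition \ref{2.2}. This forces $\mathrm{rank}(\widetilde C_q D) \geq N - q$, completing the argument.
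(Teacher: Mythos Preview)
Your proof is correct and follows essentially the same route as the paper's: both argue by contradiction, produce a vector $\xi=\widetilde C_q^T\eta\in\mathrm{Im}(\widetilde C_q^T)\cap\mathrm{Ker}(D)$, take initial data proportional to $\xi$, project the system onto $\xi$ to obtain a scalar equation of the form \eqref{2.3}, verify compactness of the forcing via Aubin--Lions, and invoke Proposition~\ref{2.2}. The only cosmetic differences are that the paper normalizes $\xi$ to a unit vector (avoiding your rescaling by $|\xi|^{-2}$) and phrases the compactness bound directly on $\mathcal R\theta$ in $L^2(0,T;V)\cap H^1(0,T;H)$ rather than on $U$; neither affects the substance of the argument.
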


\begin{proof}  Assume by contradiction that  the rank condition \eqref{2.7} fails.  Then, we have 
\begin{equation}rank(\widetilde C_q D) = rank(D\widetilde C_q^T ) <   N-q = rank(\widetilde C_q^T ).
\end{equation}
By Proposition 2.11 in \cite{book}, we have 
\begin{equation}Im(\widetilde C_q^T )\cap Ker(D) \not = \{0\}.\end{equation} 
Let $E\in \hbox{Im} (\widetilde C_q^T )$ be a unit vector such that $DE=0.$  
Applying $E$ to system   \eqref{1.2} associated  with the initial data 
\begin{equation}\label{2.8} t= 0:\quad U=\theta E,\quad  U'= 0\end{equation} 
with  $\theta\in V$, and setting  $u = (\!(E, U)\!)$, we get 
\begin{equation}\begin{cases}u ''+ L u=-(\!(E,  AU)\!),\\
t=0:\quad u= \theta,\  u'=
0, \end{cases}\label{2.9} \end{equation}
here and hereafter $(\!(\cdot,  \cdot)\!)$  denotes the inner product with  the associated norm  $\|\cdot\|$  in the euclidian space $\mathbb R^N$.

Now,  we  define  the linear mapping
\begin{equation}\mathcal R:\quad \theta \rightarrow (\!(E,  AU)\!).\end{equation}
Since the matrices $A$  and $D$ are  symmetric  and semi-positive definite,  by  the dissipation of system \eqref{1.2} with the initial data \eqref{2.8},  we have \begin{equation}   \|\mathcal R\theta\|_{L^2(0, T;   V )} + \|\mathcal R\theta\|_{H^1(0, T;   H)}\leqslant   c_T\|\theta\|_V,\end{equation}
where $c_T$ is a positive constant depending only on $T$. 

Noting that the imbedding  from $L^2(0, T;   V ) \cap H^1(0, T;   H)$ into $L^2(0, T;   H)$ is compact (see  Theorem 5.1 in \cite{Lions}),  the mapping $\mathcal R$  is compact from
$ V $ into   $L^2(0, T;   H).$

On the other hand,  noting $E= \widetilde C_q^T x$, we have 
\begin{equation}u = (\!(E, U)\!) = (\!(x, \widetilde C_qU)\!).\end{equation}
Then, it follows from \eqref{2.6} that 
\begin{equation}
\|(u(t), u'(t)) \|_{ V  \times  H} \leqslant c\|\widetilde C_q(U(t), U'(t)) \|_{ V  \times H}\leqslant cM e^{-\omega t}\|\theta \|_{ V },  \quad t\geqslant 0
\end{equation}
for all $\theta\in V$. This contradicts  Proposition \ref{2.2}.  \end{proof}

In particular, taking $\widetilde C_q=I$ in Theorem \ref{2.5th}, we 
get immediately
\begin{corollary}   \label{2.10}  If   system   (\ref{1.2}) is  uniformly   stable, then we 
have rank$(D)= N.$
\end{corollary}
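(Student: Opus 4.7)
The plan is essentially to observe that Corollary \ref{2.10} is the degenerate case $q=0$ of Theorem \ref{2.5th}, so the proof consists only in verifying that the hypotheses specialize correctly.

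First I would set $q=0$ and $\widetilde C_q = I$, the $N\times N$ identity matrix. This matrix is trivially full row-rank of order $(N-0)\times N = N\times N$, so it satisfies the structural assumption on $\widetilde C_q$ in Theorem \ref{2.5th}. Next I would observe that with this choice, the weighted estimate \eqref{2.6} reduces to
\begin{equation*}
\|(U(t), U'(t))\|_{(V\times H)^{N}} \leqslant M e^{-\omega t} \|(U_0, U_1)\|_{(V\times H)^{N}},
\end{equation*}
which is exactly the uniform stability condition \eqref{2.1}. Hence the assumed uniform stability of system \eqref{1.2} supplies the hypothesis of Theorem \ref{2.5th}.

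Applying the conclusion \eqref{2.7} of Theorem \ref{2.5th}, we obtain
\begin{equation*}
\mathrm{rank}(D) = \mathrm{rank}(I\cdot D) = \mathrm{rank}(\widetilde C_q D) \geqslant N - q = N.
\end{equation*}
Combined with the trivial bound $\mathrm{rank}(D)\leqslant N$ coming from $D$ being an $N\times N$ matrix, this forces $\mathrm{rank}(D) = N$.

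There is no real obstacle here: the content of the corollary lies entirely in Theorem \ref{2.5th}, and the only thing to verify is that the identity matrix fits the framework of $\widetilde C_q$ when $q=0$. I would therefore present the proof in a single short sentence, as the authors themselves announce (``taking $\widetilde C_q=I$ in Theorem \ref{2.5th}, we get immediately''), with perhaps one extra line noting the trivial upper bound that converts the inequality into an equality.
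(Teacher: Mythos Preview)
Your proof is correct and follows exactly the paper's own argument: the authors simply note that taking $\widetilde C_q = I$ (i.e., $q=0$) in Theorem \ref{2.5th} yields the result immediately. The only addition you make is the explicit remark that $\mathrm{rank}(D)\leqslant N$ turns the inequality into an equality, which is implicit in the paper's one-line derivation.
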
 

Conversely,  we have

\begin{theoreme} \label{2.11}    
Assume  that the  scalar equation \begin{equation}u'' + L u +  g  u' =  0\label{2.12} 
\end{equation}
is uniformly   stable in  the space $ V \times  H$.  
If  rank$(D) =N$, then  system    (\ref{1.2})   is uniformly  stable  in the space  $(V \times  H)^N$. \end{theoreme}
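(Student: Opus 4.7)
The plan is to apply the classical compact perturbation theorem for contraction semigroups. I write the semigroup generator $\mathcal{A}$ of \eqref{1.2} on $X=(V\times H)^N$ as $\mathcal{A}=\mathcal{A}_0+\mathcal{K}$, where $\mathcal{A}_0$ is the generator when $A=0$ and the perturbation $\mathcal{K}(U,V)=(0,-AU)$ collects the coupling through $A$. Since $A$ is a constant matrix and $V\hookrightarrow H$ is compact, $\mathcal{K}$ is a compact operator on $X$. It then suffices to establish (i) $\mathcal{A}_0$ generates a uniformly stable semigroup, and (ii) $\mathcal{A}$ has no spectrum on the imaginary axis.

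To verify (i), note that $D$ is symmetric, semi-positive definite and of rank $N$, hence positive definite; orthogonally diagonalize it as $D=P^T\Lambda P$ with $\Lambda=\mathrm{diag}(\lambda_1,\ldots,\lambda_N)$, $\lambda_i>0$. Because $\mathcal{L}=LI$ and $\mathcal{G}=gI$ commute with constant matrices, the substitution $W=PU$ decouples the unperturbed system into $N$ scalar equations
\begin{equation*}
w_i''+Lw_i+\lambda_i g w_i'=0,\qquad i=1,\ldots,N,
\end{equation*}
each of which is itself a compact perturbation of \eqref{2.12} (adding the compact operator $(\lambda_i-1)g$). Their uniform stability then follows from the scalar hypothesis by rerunning the no-imaginary-eigenvalue argument of (ii) in the scalar setting. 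This yields uniform stability of $\mathcal{A}_0$.

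For (ii), suppose $i\mu\in\sigma(\mathcal{A})$. The compact resolvent (Proposition \ref{2.0}) upgrades this to an eigenvalue with eigenfunction $(U,i\mu U)$, so
\begin{equation*}
(\mathcal{L}+A-\mu^2 I)U+i\mu D\mathcal{G}U=0.
\end{equation*}
Pairing with $U$ in the $V'\times V$ duality and extracting imaginary parts yields $\mu\langle D\mathcal{G}U,U\rangle=0$. Since $D$ is positive definite and $\langle gu,u\rangle\geqslant 0$ on $V$, this forces $\mathcal{G}U=0$ whenever $\mu\neq 0$. The remaining equation $(\mathcal{L}+A-\mu^2 I)U=0$ together with $\mathcal{G}U=0$ is handled by orthogonally diagonalizing $A=S\,\mathrm{diag}(a_1,\ldots,a_N)\,S^T$ and setting $W=S^TU$: each component satisfies $Lw_i=(\mu^2-a_i)w_i$ and $gw_i=0$. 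Positivity of $L$ rules out $\mu^2-a_i\leqslant 0$; in the remaining case $e^{i\sqrt{\mu^2-a_i}\,t}w_i$ would be a non-decaying solution of the scalar equation \eqref{2.12}, contradicting its uniform stability. The case $\mu=0$ follows from the coercivity $\|U\|_V^2+(AU,U)_H=0$.

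The main delicate point is (i): the hypothesis is only stated for the single coefficient $\lambda=1$, and transferring it to each $\lambda_i>0$ requires invoking the very compact perturbation machinery one is trying to apply, albeit in the scalar setting. A secondary subtlety in (ii) is that the shifted squared frequency $\mu^2-a_i$ might accidentally coincide with an eigenvalue of $L$ whose eigenspace meets $\ker g$; this is precisely what the scalar uniform stability excludes.
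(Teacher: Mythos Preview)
Your overall strategy matches the paper's: write $\mathcal A=\mathcal A_0+\mathcal K$ with $\mathcal K(U,V)=(0,-AU)$ compact, prove the unperturbed system (the paper's \eqref{2.13}) is uniformly stable, and rule out imaginary spectrum for $\mathcal A$. Part (ii) of your argument is essentially identical to the paper's, only more detailed; note in passing that once you have $Lw_i=(\mu^2-a_i)w_i$ and $gw_i=0$, the function $e^{i\sqrt{\mu^2-a_i}\,t}w_i$ already solves the scalar equation \eqref{2.12} directly, so you need not route through \eqref{2.13} as the paper does.

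The real issue is in (i). You claim that $w''+Lw+\lambda_i gw'=0$ is a compact perturbation of \eqref{2.12} because ``$(\lambda_i-1)g$ is compact''. This is false at the level of generality of the theorem. The perturbation of the generator on $V\times H$ is $(u,v)\mapsto(0,-(\lambda_i-1)gv)$, and for this to be even bounded, let alone compact, one needs $g:H\to H$ bounded. But $g$ is only assumed continuous $V\to V'$: in the very first application of \S\ref{s4} (boundary feedback) $g$ is a boundary trace, undefined on $H=L^2(\Omega)$; in the locally distributed case $g$ is multiplication by $a$, bounded on $H$ but not compact. So the compact perturbation machinery does not apply here, and your parenthetical fix does not work. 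You correctly flag this step as ``the main delicate point'', but the justification you offer is the wrong one.

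The paper is admittedly terse at the same spot (``of the same type as those in \eqref{2.12}, therefore uniformly stable''), but it does not make your incorrect compactness claim. A correct way to close the gap is to use the standard equivalence, for contractive damped systems, between uniform stability of $u''+Lu+gu'=0$ and an observability inequality $\int_0^T\langle g\phi',\phi'\rangle\,dt\geqslant c\,E(0)$ for the conservative problem $\phi''+L\phi=0$; this inequality is manifestly invariant under $g\mapsto\lambda g$ for $\lambda>0$, hence transfers uniform stability to each $w_i''+Lw_i+\lambda_i gw_i'=0$.
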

\begin{proof}  Following the classical theory (see  \cite{Russell,  Trig}),  the uniform   stability of a semi-group is robust by compact perturbation.  
This property was served in \cite{Asym1997,  rao1} for obtaining  the uniform  stability.   

More precisely, since the mapping $ U\rightarrow  AU$ is compact from $ V$ into  $ H$, the asymptotic stability of  the coupled system    (\ref{1.2}) and the  uniform stability of the  following  decoupled  system  
\begin{equation}
U'' + \mathcal L U + D\mathcal G U'=  0\label{2.13} 
\end{equation}  
yield the  uniform stability  of the coupled system    (\ref{1.2}).  
 
Since rank$(D)=N$, system   \eqref{2.13}  can be  decomposed into  $N$ scalar equations of the same type as  those  in \eqref{2.12}, therefore,  it is uniformly  stable. 

On the other hand,  by Proposition \ref{2.0}, the resolvent of system (\ref{1.2})  is  compact.  Then by the classical theory of  semigroups, the asymptotic stability of   system  (\ref{1.2})  is equivalent to the uniqueness of  the following over-determined  system:
\begin{equation} (\mathcal L+A)\Phi =\beta^2\Phi \quad \hbox{and}\quad  \mathcal G\Phi =0.\end{equation} 
Let $AE= \lambda E$ with $\lambda\geqslant 0$. Then, setting $\phi=(E, \Phi)$, it follows that
\begin{equation} L\phi =(\beta^2-\lambda)\phi \quad \hbox{and}\quad   g \phi =0.\end{equation} 
By the definition of the dual mapping, we have 
$$\langle L\phi, \phi\rangle_{V ',  V} 
=( L\phi, \phi)_{ H}  = \|\phi\|^2_{V}.$$
It follows that 
$\beta^2-\lambda >0$.
Then, we check easily that 
\begin{equation}U = e^{it\sqrt{\beta^2-\lambda}}\phi E\end{equation}
satisfies  system  \eqref{2.13},  which is uniformly stable.  We get  $(\!(E, \Phi)\!)=\phi = 0$ for each eigenvector $E$ of $A$,  then  $\Phi=0$. \end{proof}
\begin{rem} Roughly speaking, Theorem \ref{2.11}  indicates  that the uniform stability of system \eqref{1.2} can be obtained by means of  the scalar equation \eqref{2.12}.  It  provides thus a direct and efficient approach to solve a seemingly  difficult  problem of uniform stability of a complex system.
\end{rem} 

\vskip 1cm

\section{Uniform  synchronization by $p$-groups} \label{s3}
By Corollary \ref{2.10},  when    rank$(D)<N$,    system   \eqref{1.2}  is not   uniformly stable.  
Instead of the stability,  we turn to consider its synchronization by $p$-groups. 

Let $S_r$ be  the full row-rank matrix of order 
$(n_r-n_{r-1}-1)\times (n_r-n_{r-1})$:
 \begin{equation}S_r = \begin{pmatrix}1&-1\\
&1&-1&\cr&&\ddots&\ddots\\
&&&1&-1\end{pmatrix},\quad 1\leqslant   r\leqslant   p. \end{equation}  Define the 
$(N-p)\times N$ matrix $C_p$  of synchronization  by $p$-groups as
\begin{equation}C_p=\begin{pmatrix}
S_1\cr
&S_2\cr &&\ddots\cr
&&&S_p\end{pmatrix}. \label{3.1}\end{equation}
The uniform  synchronization   by $p$-groups (\ref{1.3})
can be equivalently rewritten by \eqref{1.4}, which is easy to be analyzed. 

Let $\epsilon_1,\cdots, \epsilon_N$ be the vectors of the canonical basis of $\rr^N$. Defining  
 \begin{equation} e_r=\sum_{i=n_{r-1}+1}^{n_r}\epsilon_i,\quad  1\leqslant   r\leqslant   p,\label{3.2}\end{equation}
we have  \begin{equation} Ker (C_p)=Span\{e_1,\cdots,
e_p\}.\label{3.3}\end{equation}

\begin{proposition}  (see Proposition 4.2 in \cite{rao2}) The matrix $A$ satisfies the  condition of  $C_p$-compatibility:
\begin{equation}AKer(C_p)\subseteq  Ker(C_p)\label{3.4}\end{equation}
if and only if    there exists  a symmetric and semi-positive definite matrix $\overline A_p$  of order  $(N-p)$, such that \eqref {1.4n} holds.
\end{proposition}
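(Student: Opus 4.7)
The plan is to reformulate both conditions in terms of the single matrix $P := (C_p C_p^T)^{-1/2} C_p$, which is a partial isometry of shape $(N-p)\times N$: since $C_p$ has full row rank, $C_pC_p^T$ is positive definite, and a direct computation gives $PP^T = I_{N-p}$, whence $P^TP$ is the orthogonal projection of $\rr^N$ onto $\mathrm{Im}(C_p^T)=\mathrm{Ker}(C_p)^\perp$. In this notation, the identity \eqref{1.4n} reduces to $PA=\overline A_p P$, while the $C_p$-compatibility condition \eqref{3.4} asserts that $A$ preserves $\mathrm{Ker}(P)=\mathrm{Ker}(C_p)$.

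For the implication $\eqref{3.4}\Rightarrow\eqref{1.4n}$, I would simply set
\[
\overline A_p := PAP^T.
\]
Symmetry is inherited from $A$, and the identity $y^T\overline A_p y = (P^Ty)^T A (P^Ty) \ge 0$ yields semi-positive definiteness. The remaining equality $PA=\overline A_p P$ rewrites as $PA(I-P^TP)=0$; since $I-P^TP$ is the orthogonal projection onto $\mathrm{Ker}(C_p)$, this vanishing follows at once from the hypothesis $A\,\mathrm{Ker}(C_p)\subseteq\mathrm{Ker}(C_p)=\mathrm{Ker}(P)$. Conversely, if \eqref{1.4n} holds, then for any $x\in\mathrm{Ker}(C_p)=\mathrm{Ker}(P)$ one has $PAx=\overline A_p Px=0$, so $Ax\in\mathrm{Ker}(P)=\mathrm{Ker}(C_p)$, which is exactly \eqref{3.4}.

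The proof is algebraically light and I do not expect a genuine obstacle. The only delicate point is to check that the candidate reduced matrix $PAP^T$ inherits both the symmetry and the semi-positive definiteness of $A$, which is essentially a congruence under $P^T$; implicit here is the fact that symmetry of $A$ automatically upgrades the invariance of $\mathrm{Ker}(C_p)$ to invariance of its orthogonal complement, so that the compact formula $\overline A_p = PAP^T$ genuinely captures the action of $A$ modulo $\mathrm{Ker}(C_p)$.
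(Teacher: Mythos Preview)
Your argument is correct. The introduction of the partial isometry $P=(C_pC_p^T)^{-1/2}C_p$ with $PP^T=I_{N-p}$ and $P^TP$ the orthogonal projector onto $\mathrm{Ker}(C_p)^\perp$ makes both directions transparent, and the explicit formula $\overline A_p=PAP^T$ is exactly the right candidate; symmetry and semi-positive definiteness follow immediately by congruence.

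Note, however, that the paper does \emph{not} supply its own proof of this proposition: it merely quotes the result from Proposition~4.2 of \cite{rao2}. So there is nothing in the present paper to compare your argument against. Your proof is precisely the standard linear-algebra justification one would expect for such a statement, and it is self-contained, so it would serve perfectly well as the missing proof here.
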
 

\begin{proposition}  \label{3.4th}(see Proposition 4.4 in \cite{rao2})  The matrix $D$ satisfies the condition of strong   $C_p$-compatibility: 
\begin{equation}Ker(C_p) \subseteq Ker (D)\label{3.5}\end{equation}     
if and only if  there exists  a  symmetric  and semi-positive definite matrix   $R$ of order $(N-p)$,  such that
\begin{equation}D = C_p^T RC_p. \label{3.6}\end{equation}  
Moreover, setting 
\begin{equation}\label{3.6b}\overline D_p = (C_pC_p^T )^{1/2}R(C_pC_p^T )^{1/2},\end{equation}
we have \begin{equation}(C_pC_p^T )^{-1/2}C_pD= \overline D_p(C_pC_p^T )^{-1/2}C_p.\label{3.7}\end{equation} 
\end{proposition}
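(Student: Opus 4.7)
The plan is to split the statement into three pieces: the easy implication ($D=C_p^TRC_p\Rightarrow Ker(C_p)\subseteq Ker(D)$), the nontrivial converse, and the concluding identity \eqref{3.7}.

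The easy direction is immediate: if $D=C_p^TRC_p$ and $v\in Ker(C_p)$, then $Dv=C_p^TR(C_pv)=0$, so $v\in Ker(D)$.

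For the converse, I would exploit the fact that $C_p$ has full row rank $N-p$, so $C_pC_p^T$ is an invertible symmetric positive definite matrix of order $N-p$, and the matrix
\begin{equation*}
P=C_p^T(C_pC_p^T)^{-1}C_p
\end{equation*}
is the orthogonal projection of $\mathbb R^N$ onto $\mathrm{Im}(C_p^T)=Ker(C_p)^\perp$. Since $D$ is symmetric, the hypothesis $Ker(C_p)\subseteq Ker(D)$ gives, by passage to orthogonal complements, that $\mathrm{Im}(D)\subseteq Ker(C_p)^\perp=\mathrm{Im}(C_p^T)$; combining with $Ker(C_p)\subseteq Ker(D)$ yields $D=PDP$. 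The natural candidate for $R$ is then
\begin{equation*}
R=(C_pC_p^T)^{-1}C_pDC_p^T(C_pC_p^T)^{-1}.
\end{equation*}
A direct computation shows $C_p^TRC_p=PDP=D$; the symmetry of $R$ is inherited from that of $D$; and for any $y\in\mathbb R^{N-p}$, setting $z=C_p^T(C_pC_p^T)^{-1}y$ we get $y^TRy=z^TDz\geqslant 0$, so $R$ is semi-positive definite. This produces the required decomposition.

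For the moreover part \eqref{3.7}, starting from $D=C_p^TRC_p$, I would compute both sides of \eqref{3.7}:
\begin{equation*}
(C_pC_p^T)^{-1/2}C_pD=(C_pC_p^T)^{-1/2}(C_pC_p^T)RC_p=(C_pC_p^T)^{1/2}RC_p,
\end{equation*}
while, using the definition \eqref{3.6b} of $\overline D_p$,
\begin{equation*}
\overline D_p(C_pC_p^T)^{-1/2}C_p=(C_pC_p^T)^{1/2}R(C_pC_p^T)^{1/2}(C_pC_p^T)^{-1/2}C_p=(C_pC_p^T)^{1/2}RC_p,
\end{equation*}
which proves the identity. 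The semi-positive definiteness of $\overline D_p$ follows from that of $R$ by a congruence argument. The only subtlety I foresee is the rigorous use of the symmetry of $D$ to identify $Ker(C_p)\subseteq Ker(D)$ with $D=PDP$; everything else reduces to linear algebra with the Moore--Penrose-type pseudoinverse $C_p^T(C_pC_p^T)^{-1}$ of the surjective matrix $C_p$.
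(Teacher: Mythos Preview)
Your argument is correct in every step: the easy direction is immediate, the converse via the orthogonal projection $P=C_p^T(C_pC_p^T)^{-1}C_p$ onto $\mathrm{Im}(C_p^T)$ together with the explicit formula $R=(C_pC_p^T)^{-1}C_pDC_p^T(C_pC_p^T)^{-1}$ is clean, and the verification of \eqref{3.7} is a straightforward computation. Note, however, that the paper does not actually supply its own proof of this proposition; it simply cites Proposition~4.4 of \cite{rao2}, so there is no in-paper argument to compare yours against. Your self-contained linear-algebra proof is exactly the sort of justification one would expect behind that citation.
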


\begin{rem}  By  the expression \eqref{3.2}, 
it is easy to check that the condition of $C_p$-compatibility \eqref{3.4}  is equivalent to the row-sum condition by blocks
\begin{align}
 \sum_{j=n_{s-1}+1}^{n_{s}}a_{ij}= \alpha_{rs}, \quad n_{r-1}+1 \leqslant  i \leqslant  n_{r}, \quad  1 \leqslant  r,s \leqslant  p, \end{align}
 where $ \alpha_{rs}$ are some constants.  
In particular, when $p=1$,   $A$ satisfies  the
row-sum condition:
\begin{align}
\sum_{p=1}^N a_{kp}=\alpha,\quad     k=1, \cdots, N.
\end{align}

The condition of strong   $C_p$-compatibility
\eqref{3.5} is equivalen to 
\begin{equation}DKer(C_p)=\{0\}.\end{equation} 
That means that 
$D=(d_{ij})$ satisfies  the null row-sum condition by blocks
\begin{align}
 \sum_{j=n_{s-1}+1}^{n_{s}}d_{ij}= 0, \quad n_{r-1}+1 \leqslant  i \leqslant  n_{r}, \quad  1 \leqslant  r,s \leqslant  p. \end{align}
\end{rem}

Now applying $C_p$ to   system   \eqref{1.2},  and setting $W= C_pU$, we get  a self-closed reduced system \begin{equation}W'' + \mathcal L W  + \overline A_pW + \overline D_{p}GW'  =  0.\label{3.9}\end{equation} 
Moreover, it is easy to check  the following  basic result.
\begin{proposition}  \label{3.9th}
Assume  that  the matrices $A$ and $D$ satisfy the condition of $C_p$-compatibility \eqref{3.4}  and  the condition of strong $C_p$-compatibility \eqref{3.5}, respectively. The uniform   synchronization  by $p$-groups of  system   \eqref{1.2}  in the space 
$(V\times  H)^{N}$
 is equivalent to the uniform   stability of the  reduced  system \eqref{3.9}
in the space  $(V \times  H)^{N-p}$. \end{proposition}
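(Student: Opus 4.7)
The plan is to exploit two facts: that the change of variable $W=C_pU$ (suitably normalized) carries solutions of \eqref{1.2} to solutions of \eqref{3.9}, and that $C_p$, having full row rank $N-p$, is surjective from $\mathbb{R}^N$ onto $\mathbb{R}^{N-p}$ (and hence from $(V\times H)^N$ onto $(V\times H)^{N-p}$).

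First I would verify that $W=C_pU$ satisfies the reduced system whenever $U$ satisfies \eqref{1.2}. Since the diagonal operators $\mathcal{L}=LI$ and $\mathcal{G}=gI$ commute with every constant matrix, applying $C_p$ to \eqref{1.2} produces
\[
(C_pU)'' + \mathcal{L}(C_pU) + (C_pA)U + (C_pD)\mathcal{G}U' = 0.
\]
The $C_p$-compatibility hypothesis \eqref{3.4}, reformulated as \eqref{1.4n}, intertwines $C_p$ with $A$; the strong $C_p$-compatibility \eqref{3.5}, via Proposition~\ref{3.4th}, intertwines $C_p$ with $D$ through \eqref{3.7}. These intertwinings convert $C_pAU$ and $C_pD\mathcal{G}U'$ into $\overline{A}_p(C_pU)$ and $\overline{D}_p\mathcal{G}(C_pU)'$ respectively (absorbing the conjugation by $(C_pC_p^T)^{1/2}$ implicit in the normalization of $W$ used in \eqref{1.5}). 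Hence $W=C_pU$ solves the reduced system \eqref{3.9}.

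For the forward implication, given any $(U_0,U_1)\in(V\times H)^N$, the function $W=C_pU$ solves \eqref{3.9} with initial data $(C_pU_0,C_pU_1)$. Feeding this into the uniform stability bound for \eqref{3.9} and rewriting the result in terms of $U$ gives exactly \eqref{1.4}, i.e.\ the uniform synchronization by $p$-groups. For the converse, I would start from an arbitrary $(W_0,W_1)\in(V\times H)^{N-p}$, lift it via the right-inverse $C_p^+=C_p^T(C_pC_p^T)^{-1}$ to $(U_0,U_1)=(C_p^+W_0,C_p^+W_1)\in(V\times H)^N$, and let $U$ solve \eqref{1.2} with this initial data. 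By the first step $C_pU$ solves \eqref{3.9} with initial data $(W_0,W_1)$, and by uniqueness for \eqref{3.9} it coincides with $W$. Applying the synchronization estimate \eqref{1.4} to this particular $U$ is then precisely the uniform stability estimate for $W$.

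There is no real obstacle here, since everything reduces to the surjectivity of $C_p$ combined with the compatibility-based intertwining already established in Propositions~\ref{3.9th}'s predecessors. The only point requiring attention is the normalization: strictly speaking \eqref{1.4n} and \eqref{3.6b} define the symmetric reduced matrices $\overline{A}_p,\overline{D}_p$ in the coordinates $W=(C_pC_p^T)^{-1/2}C_pU$, whereas \eqref{3.9} is stated in the coordinates $W=C_pU$. Because $(C_pC_p^T)^{1/2}$ is a fixed invertible matrix, passing between the two formulations amounts to an equivalent (norm-equivalent) change of variables, which does not affect uniform stability.
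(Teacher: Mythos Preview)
Your proposal is correct and matches the paper's own treatment: the paper does not give a detailed proof of this proposition at all, merely remarking that ``it is easy to check'' after deriving the reduced system \eqref{3.9} by applying $C_p$ to \eqref{1.2}. Your argument---passing from $U$ to $W=C_pU$ via the compatibility intertwinings, and lifting back via the right-inverse $C_p^T(C_pC_p^T)^{-1}$ for the converse---is exactly the routine verification the paper has in mind, and your observation about the harmless normalization discrepancy between $W=C_pU$ in \eqref{3.9} and $W=(C_pC_p^T)^{-1/2}C_pU$ in \eqref{1.5} is a valid clarification of a point the paper leaves implicit.
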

 
Since the reduced matrices $\overline A_p$ and $ \overline D_{p}$ are still  symmetric  and semi-positive definite,  the uniform stability of the reduced  system   (\ref{3.9})
can be treated by Theorem \ref{2.11}.   So,  the  uniform  synchronization  by $p$-groups
is reduced to the uniform  stability. However, the necessity of the condition of $C_p$-compatibility for $A$ and  that of  the  condition of strong $C_p$-compatibility   for $D$  are intrinsically  linked with  the rank of the matrix $D$.   

\begin{proposition}  \label{3.10th} If   system   \eqref{1.2} is  uniformly  synchronizable by $p$-groups, then we  necessarily have
\begin{equation}\hbox{rank}(C_pD)\geqslant   N-p.\end{equation}
\end{proposition}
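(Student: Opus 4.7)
The plan is to recognize that Proposition \ref{3.10th} is essentially a direct specialization of Theorem \ref{2.5th} to the particular full row-rank observation matrix $\widetilde C_q = C_p$, and to reduce the stated synchronization estimate to the hypothesis of that theorem.

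First I would check that $C_p$ is a legitimate choice of $\widetilde C_q$ in Theorem \ref{2.5th}. By construction \eqref{3.1}, $C_p$ has size $(N-p)\times N$ and is block-diagonal with blocks $S_r$ that are themselves full row-rank (each $S_r$ is $(n_r-n_{r-1}-1)\times(n_r-n_{r-1})$ with an obvious pivot in every row). Hence $C_p$ is a full row-rank matrix of order $(N-p)\times N$, and the parameter $q$ in Theorem \ref{2.5th} will be taken to be $p$.

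Next I would translate the hypothesis of uniform synchronization by $p$-groups into the inequality \eqref{2.6} required by Theorem \ref{2.5th}. The defining estimate \eqref{1.3} has already been rewritten in the equivalent form \eqref{1.4}:
\begin{equation*}
\|C_p(U(t), U'(t))\|_{(V\times H)^{N-p}}\leqslant Me^{-\omega t}\|C_p(U_0, U_1)\|_{(V\times H)^{N-p}}, \quad t\geqslant 0.
\end{equation*}
Since the matrix $C_p$ has constant entries, it defines a bounded linear operator on $(V\times H)^N$ to $(V\times H)^{N-p}$, so
\begin{equation*}
\|C_p(U_0, U_1)\|_{(V\times H)^{N-p}}\leqslant \|C_p\|\,\|(U_0, U_1)\|_{(V\times H)^N}.
\end{equation*}
Replacing $M$ by $M\|C_p\|$ if necessary, we obtain exactly the estimate \eqref{2.6} with $\widetilde C_q = C_p$ and $q = p$.

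Finally, applying Theorem \ref{2.5th} yields $\mathrm{rank}(C_p D)\geqslant N-p$, which is the desired conclusion. There is no real obstacle; the only thing to be careful about is that the natural norm on the right-hand side of \eqref{1.4} is the $C_p$-seminorm of the initial data rather than the full norm $\|(U_0,U_1)\|_{(V\times H)^N}$, but this is immediately absorbed by the continuity of $C_p$, so the hypothesis of Theorem \ref{2.5th} is satisfied and no additional argument is needed.
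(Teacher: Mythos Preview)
Your proof is correct and follows exactly the paper's approach: the paper's proof is the single line ``It is sufficient to take $\widetilde C_q = C_p$ in Theorem \ref{2.5th}.'' You have simply spelled out the verification that $C_p$ is full row-rank and that the synchronization estimate \eqref{1.4} implies \eqref{2.6}, which is precisely what that one line leaves to the reader.
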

 
\begin{proof}  It is sufficient to take $\widetilde C_q  = C_p$ in  Theorem \ref{2.5th}.
\end{proof} 

\begin{proposition}  \label{3.10} The following rank condition
\begin{equation}\hbox{rank}(D) = \hbox{rank}(C_pD)=N-p\label{3.11}\end{equation}
holds, if and only if  $Ker(D)$ and $Ker(C_p)$  are bi-orthonormal.
\end{proposition}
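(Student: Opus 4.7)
The plan is to translate both sides of the equivalence into statements about the relative positions of $Ker(D)$, $Im(D)$ and $Ker(C_p)$, and then use the symmetry of $D$ to match them. Since $C_p$ is full row-rank of order $(N-p)\times N$, one has $\dim Ker(C_p)=p$, so the very notion of bi-orthonormality in the statement tacitly forces $\dim Ker(D)=p$, i.e.\ $\hbox{rank}(D)=N-p$. Under this dimension match, the existence of bases $\{f_1,\dots,f_p\}$ of $Ker(D)$ and $\{e_1,\dots,e_p\}$ of $Ker(C_p)$ with $(\!(f_i,e_j)\!)=\delta_{ij}$ is classically equivalent to
\begin{equation}
Ker(D)^\perp\cap Ker(C_p)=\{0\},
\end{equation}
since with matrix $P$ (resp.\ $Q$) whose columns form a basis of $Ker(D)$ (resp.\ $Ker(C_p)$), bi-orthonormality amounts to the invertibility of $P^T Q$, which in turn is equivalent to the above triviality of the intersection.

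Next I would exploit the symmetry of $D$ to write $Im(D)=Ker(D)^\perp$, so that the previous intersection condition rewrites as $Im(D)\cap Ker(C_p)=\{0\}$. Applying the rank-nullity identity to $C_p$ restricted to the subspace $Im(D)$ gives
\begin{equation}
\hbox{rank}(C_pD)=\dim C_p\bigl(Im(D)\bigr)=\dim Im(D)-\dim\bigl(Im(D)\cap Ker(C_p)\bigr).
\end{equation}
Thus, under $\hbox{rank}(D)=\dim Im(D)=N-p$, the equality $\hbox{rank}(C_pD)=N-p$ holds if and only if $Im(D)\cap Ker(C_p)=\{0\}$, which is exactly the geometric content of bi-orthonormality.

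Both implications now follow from the same chain of equivalences. For the direct sense, the two rank equalities give $\dim Ker(D)=p=\dim Ker(C_p)$ and $Im(D)\cap Ker(C_p)=\{0\}$, hence bi-orthonormality. For the converse, bi-orthonormality forces $\dim Ker(D)=p$, so $\hbox{rank}(D)=N-p$, and then the triviality of $Ker(D)^\perp\cap Ker(C_p)=Im(D)\cap Ker(C_p)$ combined with the displayed formula yields $\hbox{rank}(C_pD)=N-p$.

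There is no substantial obstacle in the argument; the only subtle point is to invoke the symmetry of $D$ at the right place to swap $Ker(D)^\perp$ for $Im(D)$, which is the bridge between the kernel-based definition of bi-orthonormality and the rank condition on $C_pD$. Everything else is a matter of dimension counting and the standard characterization of bi-orthonormal subspaces via the non-degeneracy of the inner-product pairing.
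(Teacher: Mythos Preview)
Your argument is correct and follows essentially the same route as the paper's proof: translate the rank equalities into the triviality of $Im(D)\cap Ker(C_p)$ (equivalently $Ker(D)\cap Ker(C_p)^\perp$), invoke the symmetry of $D$ to identify $Im(D)$ with $Ker(D)^\perp$, and recognize this as the non-degeneracy characterization of bi-orthonormality. The only cosmetic difference is that the paper outsources these steps to two cited propositions from \cite{book}, while you carry out the dimension counting and the $P^TQ$-invertibility argument explicitly.
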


\begin{proof} 
By  Proposition 2.11 in \cite{book}, the rank condition \eqref{3.11} is equivalent to
\begin{equation}Ker( D)\cap Im(C_p^T )  = Ker(C_p)\cap Im(D)=\{0\},\end{equation}
namely,
\begin{equation}Ker(D)\cap \{Ker(C_p)\}^\perp  = Ker(C_p)\cap \{Ker(D^T)\}^\perp=\{0\}.\end{equation}
Hence by Proposition 2.5 in \cite{book},  Ker$(D)$ and  Ker$(C_p)$ are bi-orthogonal. 
 \end{proof}
 
\begin{theoreme}   \label{3.12} Assume that  system   (\ref{1.2}) is  uniformly  synchronizable by $p$-groups under the minimal rank conditions (\ref{3.11}).
Then  $A$ satisfies  the condition of $C_p$-compatibility \eqref{3.4} and  $D$ satisfies  the condition of strong $C_p$-compatibility \eqref{3.5}.
\end{theoreme}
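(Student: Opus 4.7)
The strategy is to use the uniform synchronization to exhibit $\mathrm{Ker}(C_p)\otimes V \times \mathrm{Ker}(C_p)\otimes H$ as a closed invariant subspace of the semigroup, and then to read off the two compatibility conditions from this invariance.

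I would first apply the synchronization estimate \eqref{1.4} to initial data $(U_0,U_1)$ satisfying $C_pU_0=0$ and $C_pU_1=0$: the right-hand side vanishes, so $C_pU(t)=0$ and $C_pU'(t)=0$ for every $t\geqslant 0$, hence the subspace is positively invariant under the semigroup. Restricting to initial data in the domain of the infinitesimal generator (dense in this subspace), so that \eqref{1.2} is satisfied pointwise in $H^N$, and writing $U(t)=\sum_{s=1}^p u_s(t)\,e_s$ in the basis \eqref{3.2} of $\mathrm{Ker}(C_p)$, I apply $C_p$ to \eqref{1.2}. The contributions $C_pU''$ and $C_p\mathcal{L}U$ drop out because $\mathcal{L}$ is diagonal and $e_s\in\mathrm{Ker}(C_p)$, leaving the pointwise identity
\begin{equation*}
\sum_{s=1}^p u_s(t)\,C_pAe_s \;+\; \sum_{s=1}^p g(u_s'(t))\,C_pDe_s \;=\; 0.
\end{equation*}

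I would then separate the two contributions by specializing the initial data. With $(U_0,U_1)=(\theta e_r,0)$ for $\theta\in V$ sufficiently regular, evaluation of the identity at $t=0$ gives $\theta\,C_pAe_r=0$, hence $C_pAe_r=0$ for each $r$, which is the condition of $C_p$-compatibility \eqref{3.4} for $A$. Substituting this back, the $A$-sum vanishes identically and the residual identity reads $\sum_s g(u_s')\,C_pDe_s=0$. Applying it to $(U_0,U_1)=(0,\theta e_r)$ with $\theta$ chosen so that $g(\theta)\neq 0$ (possible since $g\not\equiv 0$ is necessary for any dissipation, and hence for the standing synchronization hypothesis), evaluation at $t=0$ yields $g(\theta)\,C_pDe_r=0$, so $C_pDe_r=0$. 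Thus $D(\mathrm{Ker}(C_p))\subseteq\mathrm{Ker}(C_p)$, and by linearity and density this extends from the generator domain to all of $\mathrm{Ker}(C_p)$. To upgrade this inclusion to the strong $C_p$-compatibility \eqref{3.5}, I invoke the minimal rank hypothesis: since $D$ is symmetric, $\mathrm{Im}(D)=\mathrm{Ker}(D)^\perp$, so $D(\mathrm{Ker}(C_p))\subseteq\mathrm{Ker}(C_p)\cap\mathrm{Ker}(D)^\perp$, and Proposition \ref{3.10} applied under \eqref{3.11} forces this intersection to be $\{0\}$, whence $\mathrm{Ker}(C_p)\subseteq\mathrm{Ker}(D)$.

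The main technical obstacle is the rigorous pointwise evaluation of the displayed identity at $t=0$, since the weak solution given by Proposition \ref{2.0} need not allow such an evaluation; this is handled by working first with classical solutions issued from the domain of the generator (so that \eqref{1.2} is an equality in $H^N$ depending continuously on $t$), and then propagating the algebraic conclusions $C_pAe_r=0$ and $C_pDe_r=0$ to arbitrary $\theta\in V$ by linearity in the initial data and density of $\mathrm{Dom}(\mathcal{A})$ in $(V\times H)^N$.
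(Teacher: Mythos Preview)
Your proposal is correct and follows essentially the same line as the paper: synchronized initial data yield an identically synchronized trajectory, substitution of $U(t)=\sum_s u_s(t)e_s$ into \eqref{1.2}, application of $C_p$, and evaluation at $t=0$ with independently varying $u_{0s},u_{1s}$ give $C_pAe_s=0$ and $C_pDe_s=0$, after which Proposition~\ref{3.10} (bi-orthogonality under \eqref{3.11}) upgrades the latter to $De_s=0$. Your closing density remarks are superfluous---once established for a single admissible $\theta$, the conclusions $C_pAe_r=0$ and $C_pDe_r=0$ are purely algebraic identities in $\mathbb{R}^{N-p}$ and require no extension---but this does not affect the correctness.
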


\begin{proof}  
Let $U$ be the solution to system \eqref{1.2} with the following initial data:
\begin{equation}  U_0=\sum_{r=1}^pu_{0r}e_r,\quad U_1=\sum_{r=1}^pu_{1r}e_r,\end{equation}
where $u_{0r}\in V$ and $u_{1r}\in H$ for $r=1,\cdots, p$.
Then by \eqref{1.4} we have
\begin{equation}t\geqslant 0: \quad  \|C_p(U(t), U'(t))\|_{( V  \times H)^{N-p}}\leqslant Me^{-\omega t}\|C_p(U_0, U_1)\|_{( V  \times  H)^{N-p}}=0.\end{equation}
There exist some functions $u_1,\cdots, u_p$ in $C^0(\mathbb R^+, V)\cap C^1(\mathbb R^+, H)$, such that
\begin{equation}  U=\sum_{s=1}^pu_se_s.\end{equation}
Then\begin{equation}  \sum_{s=1}^pu_s''e_s  + \sum_{s=1}^pLu_se_s  + \sum_{s=1}^p g  u_s'De_s + \sum_{s=1}^pu_sAe_s = 0.\end{equation}
Applying $C_p$ to  both sides of the above system, it follows that 
\begin{equation} \sum_{s=1}^p g  u_s'C_pDe_s + \sum_{s=1}^pu_sC_pAe_s = 0.\end{equation}
In particular,  by the continuity at $t=0$, we have 
\begin{equation} \sum_{s=1}^p g  u_{1s}C_pDe_s + \sum_{s=1}^pu_{0s}C_pAe_s = 0,\end{equation}
then
\begin{equation}C_pAe_s = 0, \quad C_pDe_s = 0, \quad s=1,\cdots, p.\end{equation}
Thus  $A$ satisfies  the condition of $C_p$-compatibility \eqref{3.4}, and $D$ satisfies a similar condition of $C_p$-compatibility  as  in \eqref{3.4}.

We next show that $D$ satisfies the  condition of strong $C_p$-compatibility \eqref{3.5}. 
In fact, for $s=1,\cdots, p$, we have
\begin{equation}  (\!(De_s, d)\!)  = (\!(e_s, Dd) \!) =0, \quad d\in Ker(D),\end{equation}
then $De_s\in \hbox{Ker}(D)^\perp\cap \hbox{Ker}(C_p)$.
By Proposition \ref{3.10},  Ker$(D)$ is bi-orthogonal to  Ker$(C_p)$, so
$\hbox{Ker}(D)^\perp\cap \hbox{Ker}(C_p) =\{0\}$. Then 
\begin{equation}  De_s =0, \quad s=1,\cdots, p.\end{equation}
We get thus  the condition of strong $C_p$-compatibility \eqref{3.5} for the matrix  $D$. 
\end{proof} 

\begin{theoreme} \label{3.14}   Assume that $A$ satisfies  the condition  of $C_p$-compatibility  \eqref{3.4} and $D$  satisfies the condition  of strong $C_p$-compatibility \eqref{3.6} with  rank$ (R) = N-p.$  Assume  furthermore that the  scalar equation \eqref{2.12} 
is uniformly   stable in  the space $ V \times  H$.  Then  system    (\ref{1.2}) is uniformly synchronizable by $p$-groups in $(  V \times  H )^N$. 
\end{theoreme}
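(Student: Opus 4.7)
The plan is to reduce the problem to a uniform stability question and then invoke Theorem \ref{2.11}. Since by hypothesis $A$ satisfies the condition of $C_p$-compatibility \eqref{3.4} and $D$ satisfies the condition of strong $C_p$-compatibility \eqref{3.5}, Proposition \ref{3.9th} applies and yields the equivalence between the uniform synchronization by $p$-groups of \eqref{1.2} in $(V\times H)^{N}$ and the uniform stability of the self-closed reduced system \eqref{3.9} in $(V\times H)^{N-p}$. The task thus reduces to verifying that Theorem \ref{2.11} can be applied to the reduced system \eqref{3.9}.

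The reduced matrices $\overline A_p$ and $\overline D_p$ remain symmetric and semi-positive definite by construction, and the scalar equation underlying \eqref{3.9} is still \eqref{2.12}, whose uniform stability in $V\times H$ is assumed. The only remaining point is the full-rank hypothesis on the damping matrix. By Proposition \ref{3.4th} one has
$$\overline D_p = (C_pC_p^T)^{1/2}R(C_pC_p^T)^{1/2}.$$
Since $C_p$ has full row rank, $C_pC_p^T$ is an invertible matrix of order $N-p$, hence so is $(C_pC_p^T)^{1/2}$. It follows that $\hbox{rank}(\overline D_p)=\hbox{rank}(R)=N-p$, which is exactly the maximality condition required by Theorem \ref{2.11} for a system of $N-p$ unknowns.

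Applying Theorem \ref{2.11} to the reduced system \eqref{3.9} then produces its uniform stability in $(V\times H)^{N-p}$, and Proposition \ref{3.9th} translates this into the desired uniform synchronization by $p$-groups of \eqref{1.2} in $(V\times H)^{N}$. The argument is essentially a short reduction; the one nontrivial input is the explicit formula \eqref{3.6b} from Proposition \ref{3.4th}, from which the invertibility of $(C_pC_p^T)^{1/2}$ allows the rank of $R$ to be transferred intact to $\overline D_p$. I do not anticipate any serious obstacle beyond this rank bookkeeping, so the statement should follow as a direct corollary of Proposition \ref{3.9th} and Theorem \ref{2.11}.
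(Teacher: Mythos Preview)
Your proposal is correct and follows essentially the same route as the paper: reduce via Proposition~\ref{3.9th} to the uniform stability of the reduced system~\eqref{3.9}, use~\eqref{3.6b} together with the invertibility of $(C_pC_p^T)^{1/2}$ to obtain $\hbox{rank}(\overline D_p)=\hbox{rank}(R)=N-p$, and conclude by Theorem~\ref{2.11}. Your write-up is in fact slightly more explicit than the paper's about why the rank transfers.
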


\begin{proof}  By  Proposition \ref{3.9th},  it is sufficient to show  the uniform   stability of the reduced  system \eqref{3.9}. 
By \eqref{3.6b},   $\hbox{rank}(\overline D_{p}) = \hbox{rank}(R) = N-p$.  Then by Theorem \ref{2.11},    the reduced system   \eqref{3.9} is uniformly stable. 
\end{proof}  
 
\begin{theoreme} \label{3.16}   Assume that  system    (\ref{1.2}) is uniformly synchronizable by $p$-groups in $(  V \times H )^N$, then   there exist some functions  $u_1,\cdots, u_p$ in $C^0(\mathbb R^+, V)\cap C^1(\mathbb R^+, H)$ and some positive constants $M\geqslant 1$ and $\omega>0$,
such that setting
\begin{equation}\label{3.17} u=\sum_{r=1}^pu_r
e_r/\|e_r\|,\end{equation}
we have for all $t\geqslant 0$, 
\begin{align}\|(U(t)  - u(t) , 
U' (t)  - u'(t) )\|_{( V  \times H)^N}  \label{3.18} \leqslant Me^{-\omega t}\|C_p(U_0, U_1)\|_{( V  \times  H)^{N-p}}.
\end{align}
Assume furthermore that $A$ satisfies  the condition  of $C_p$-compatibility  \eqref{3.4} and $D$ satisfies  the condition  of strong $C_p$-compatibility \eqref{3.5}.  Then $u$   obeys  a conservative system.
\end{theoreme}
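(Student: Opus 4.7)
The plan is to construct $u_1,\ldots,u_p$ as the orthogonal projection of $U$ onto $Ker(C_p)$, and then read off the conservative system by projecting equation (1.2) onto this kernel.

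First, I would observe that since the $e_r$ have pairwise disjoint supports by (3.2), the family $\{e_r/\|e_r\|: 1\leq r \leq p\}$ is an orthonormal basis of $Ker(C_p)$. I would therefore define
$$u_r(t) := (\!(U(t), e_r)\!)/\|e_r\|, \quad 1 \leq r \leq p,$$
so that $u(t) = \sum_{r=1}^p u_r(t) e_r/\|e_r\|$ coincides, componentwise in $V^N$, with the orthogonal projection $PU(t)$ of $U(t)$ onto $Ker(C_p)$. Then $U(t)-u(t)\in Ker(C_p)^\perp = Im(C_p^T)$ at every time $t$. Since $C_p$ is an isomorphism from $Im(C_p^T)$ onto $\mathbb R^{N-p}$, there exists $c>0$ such that $\|W\|_{\mathbb R^N}\leq c\|C_pW\|_{\mathbb R^{N-p}}$ for all $W\in Im(C_p^T)$. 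Lifting this matrix inequality to vectors with entries in $V$ and in $H$ and applying it to $U(t)-u(t)$ and $U'(t)-u'(t)$ yields
$$\|(U-u,U'-u')\|_{(V\times H)^N}\leq c\|C_p(U,U')\|_{(V\times H)^{N-p}}.$$
The uniform synchronization hypothesis (1.4) then gives (3.18), while the regularity of each $u_r$ is immediate from Proposition \ref{2.0}.

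For the second assertion, I would assume the compatibility conditions and apply the projection $P$ (extended componentwise) to both sides of (1.2). Since $\mathcal L = LI$ and $\mathcal G = gI$ act diagonally in the $\mathbb R^N$ coordinates, $P$ commutes with them, giving $P\mathcal L U = \mathcal L u$ and $P\mathcal G U' = \mathcal G u'$. The two remaining algebraic identities are $PAU = Au$ and $PD\mathcal G U' = 0$: the symmetry of $A$ combined with (3.4) implies that $A$ preserves both $Ker(C_p)$ and its orthogonal, so $PA = AP$; the symmetry of $D$ combined with (3.5) gives $Im(D) = Ker(D)^\perp \subseteq Ker(C_p)^\perp$, hence $PD=0$. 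Collecting everything, $u$ satisfies the damping-free system
$$u''+\mathcal L u + Au = 0,$$
which, in the $u_r$ coordinates, reads $u_r'' + L u_r + \sum_s \bar\alpha_{rs}u_s = 0$ with $\bar\alpha_{rs} = (\!(Ae_s,e_r)\!)/(\|e_r\|\|e_s\|)$. Since the matrix $(\bar\alpha_{rs})$ is symmetric and no dissipation remains, this is conservative.

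The main point is conceptual rather than computational: recognizing that the scaling factor $1/\|e_r\|$ in (3.17) is precisely the orthonormalization that makes $u=PU$. Once that is seen, Part 1 is a one-step consequence of (1.4) and the invertibility of $C_p$ on $Im(C_p^T)$. The only substantive work for Part 2 is establishing $PA=AP$ and $PD=0$, and these follow at once from the symmetry of $A,D$ together with the compatibility conditions, since the orthogonal complement of an invariant subspace of a symmetric operator is again invariant. Thus the striking claim that $u$ follows a conservative orbit ultimately reduces to pure finite-dimensional linear algebra combined with the already established decay estimate.
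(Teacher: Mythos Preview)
Your proposal is correct and follows essentially the same approach as the paper. The paper defines $u_r=(\!(U,e_r)\!)/\|e_r\|$, writes the complementary projection explicitly as $C_p^T(C_pC_p^T)^{-1}C_p$ to obtain the decay estimate from (1.4), and then derives the conservative system by taking the inner product of (1.2) with each $e_r$; your use of the abstract orthogonal projection $P$ and the commutation relations $PA=AP$, $PD=0$ is the same argument in slightly more invariant language.
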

 
\begin{proof}  Let $U$ be the  solution to system \eqref{1.2} with any  given initial data $(U_0, U_1)\in (V\times H)^N$. For $r=1,\cdots ,p$, let $u_r= (\!(U, e_r)\!)/\|e_r\|.$  Noting that 
$\mathbb R^N = \hbox{Ker}(C_p)\oplus \hbox{Im}(C_p^T)$, we have 
\begin{equation}
U =  \sum_{r=1}^pu_re_r/\|e_r\| + C_p^T(C_pC_p^T)^{-1}C_p U =u + C_p^T(C_pC_p^T)^{-1}C_p U.\end{equation}
By \eqref{1.4}, we get 
\begin{align}&\|(U(t)   - u(t) , 
U'(t)  - u'(t) )\|_{( V  \times  H)^N} 
\\ \leqslant   \notag &\|C_p^T(C_pC_p^T)^{-1}\|\|C_p(U(t) , U'(t) )\|_{( V  \times H )^{N-p}} \\\leqslant   \notag &M'e^{-\omega t}\|C_p(U_0, U_1)\|_{( V  \times H )^{N-p}},  \quad t\geqslant 0\end{align}
for some constant $M'\geqslant 1.$ 

Now we will precisely show  the dynamics of the functions $u_1, \cdots, u_p$. First, recall that  the condition  of $C_p$-compatibility \eqref{3.4} implies
 \begin{equation} A e_r = \sum_{s=1}^p\beta_{rs} {\|e_r\|\over \|e_s\|}e_s,\quad r=1, \cdots, p. \label{3.19}\end{equation}
Moreover, since $A$ is symmetric, a straightforward computation shows that 
\begin{equation} (A e_r, e_s)  = \sum_{q=1}^p\beta_{rq} {\|e_r\|\over \|e_q\|}(\!(e_q, e_s)\!)= \beta_{rs} \|e_r\|\|e_s\|\end{equation} and  \begin{equation} (\!(e_r,  Ae_s)\!)= \sum_{q=1}^p\beta_{sq} {\|e_s\|\over \|e_q\|}(\!(e_r,  e_q)\!) = \beta_{sr} \|e_s\| \|e_r\|.\end{equation} It follows that 
\begin{equation}\beta_{rs} = \beta_{sr},\quad 1\leqslant r, s\leqslant p. \notag\end{equation}
On the other hand, the condition  of strong $C_p$-compatibility \eqref{3.5} implies 
 \begin{equation}De_r=0, \quad r=1, \cdots, p.\end{equation}
Then, 
applying $ e_r$  to system \eqref{1.2},  we get the following  conservative system
 \begin{equation} \begin{cases}
u_r''  + Lu_r + \sum_{s=1}^p\beta_{rs}u_s  =0,\\
t=0:\quad u_r= (\!(U_0, e_r)\!)/\|e_r\|,\quad u_r'= (\!(U_1, e_r)\!)/\|e_r\|\end{cases}
 \label{3.20}\end{equation}
for $ r=1, \cdots, p$. 
\end{proof}

\begin{rem}  Classically,  the convergence \eqref{1.3} or equivalently \eqref{1.4}  is called uniform synchronization by $p$-groups in the consensus sense, while the convergence \eqref{1.3}  is in the pinning  sense.  Moreover, the $p$-tuple $u= (u_1, \cdots, u_p)$ is called the uniformly synchronizable state by $p$-groups. 
Theorem   \ref{3.16} indicates that two notions are simply the same. 

Moreover, setting the matrix $B =(\beta_{rs})$, we define  the energy by \begin{equation}\notag E(t) =  \|u(t)\|_{V^p}^2 + (B u(t), u(t))_{H^p} + \|u'(t)\|^2_{H^p}. \end{equation}
Since $B$ is symmetric, we have\begin{equation} E(t)= E(0),  \quad  t \geqslant 0.\label{3.21}\end{equation}Then the orbit of $u$ is lacalized  on  the sphere \eqref{3.21}  which is uniquely determined by the projection of the initial data $(U_0, U_1)$ to Ker$(C_p)$. 

\end{rem}

\begin{rem} 
The condition of strong $C_p$-compatibility \eqref{3.5} implies that (see Proposition 2.13 in \cite{book})
$$	\hbox{rank}(D, AD, \cdots, A^{N-1}D) =   N-p.$$
Following Theorem 4.7  in \cite{rao2},  there does not  exist 
an extended  matrix $\widetilde C_q \ (q<p)$, such that 
$$\widetilde C_q(U(t), U'(t))\rightarrow (0, 0)\quad \hbox{in } 
(V\times H)^N \hbox{ as  }  t\rightarrow +\infty.$$
Unlike  in the case of  the
approximate   boundary synchronization by $p$-groups, there is no  possibility to get any  induced synchronization 
in the present situation (see Chapter 11 in \cite{book}). 
\end{rem}


\vskip 1cm

\section{Applications} \label{s4}

\subsection{Wave equations with boundary feedback}
\setcounter{equation}{0} Let  $\Omega\subset \mathbb R^n$ be
a bounded domain  with  a smooth boundary
$\Gamma= \Gamma_1\cup\Gamma_0$ such that  $\overline \Gamma_1\cap \overline \Gamma_0=\emptyset$ and  mes$(\Gamma_1)>0$.  For fixing idea, we assume that 
mes$(\Gamma_0)>0$.

 Consider the following wave equation
 \begin{equation}\left\lbrace
\begin{array}{ll}
u''-\Delta u=  0 & \mbox{in } \rr^+\times
\Omega,\\
 u=0 &\mbox{on }
\rr^+\times \Gamma_0,
\\
 \partial_\nu u+u'=0 &\mbox{on }
\rr^+\times \Gamma_1,
\end{array}
\right.\label{4.0}
\end{equation}
where $ \partial_\nu$ denotes the outward normal derivative on the boundary. The uniform  stability of \eqref{4.0} was abundantly studied  by different approaches in the literature, we only quote  \cite{Asmp1993, lasiecka, lebau} and the references therein.  

Now, let $A$ and $D$ be symmetric and semi-positive definite matrices of order $N$. We consider  the  following  system   of  wave equations:
\begin{equation}\left\lbrace
\begin{array}{ll}
U''-\Delta U + AU  =  0 &\mbox{in }\rr^+\times
\Omega,\\
U=0 &\mbox{on }
\rr^+\times \Gamma_0,
\\
 \partial_\nu U+DU'=0&\mbox{on }
\rr^+\times \Gamma_1.
\label{4.1}
\end{array}
\right.
\end{equation}

Let $H^1_{\Gamma_0}(\Omega)$ denote the subspace of $H^1(\Omega)$, composed  of   functions with vanishing trace on $\Gamma_0$. 
Multiplying (\ref{4.1}) by $\Phi\in H^1_{\Gamma_0}(\Omega)$ and integrating by parts, we get the following variational formulation:
\begin{equation}
\int_\Omega(\!(U'', \Phi)\!)dx  +\int_\Omega(\!(\nabla U, \nabla \Phi)\!)dx  + \int_\Omega (\!(AU, \Phi)\!)dx+ \int_{\Gamma_1}(\!(DU', \Phi)\!)d \Gamma=0. \label{4.2}
\end{equation}
Define
\begin{equation}
\langle Lu, \phi\rangle = \int_\Omega \nabla u\cdot\nabla \phi dx,\quad
\langle  g  v, \phi\rangle=\int_{\Gamma_1} v\phi d\Gamma. 
\end{equation}
Then (\ref{4.2})  can be rewritten as  
\begin{equation}
U''+ \mathcal LU +AU+D\mathcal GU'=0.\label{4.3}
\end{equation}

Moreover, since  the scalar equation  \eqref{4.0} is  uniformly  stable  in $H^1_{\Gamma_0}(\Omega)\times L^2(\Omega)$,  applying   
Theorem \ref{3.14} and Theorem \ref{3.16},  we immediately obtain the following 
   
\begin{theoreme}\label{4.4}  Assume that $A$ satisfies  the condition  of $C_p$-compatibility  \eqref{3.4} and $D$  the condition  of strong $C_p$-compatibility \eqref{3.6} with  rank$(R) = N-p.$  Then  the  system   of wave equations   (\ref{4.1})  is uniformly synchronizable by $p$-groups in the space $(H^1_{\Gamma_0}(\Omega)\times L^2(\Omega))^N$.

Moreover, for any 
given initial data $(U_0, U_1)\in (H^1_{\Gamma_0}(\Omega)\times L^2(\Omega))^N$,  consider the problem \begin{equation}\left\lbrace
\begin{array}{ll}
u_r''-\Delta u_r +\sum_{s=1}^p\beta_{rs}u_s=  0 & \mbox{in } \rr^+\times
\Omega,\\
 u_r=0 &\mbox{on }
\rr^+\times \Gamma_0,\\
 \partial_\nu u_r=0 &\mbox{on }
\rr^+\times \Gamma_1,\\
t=0:\quad u_r= (\!(U_0, e_r)\!)/\|e_r\|, \quad   u_r'= (\!(U_0, e_r)\!)/\|e_r\| &\mbox{in }
 \Omega
\end{array}
\right.
\end{equation}
for $r=1,\cdots, p$,  and the coefficients $\beta_{rs}$ are given by  \eqref{3.19}.
Then, setting $u=\sum_{r=1}^pu_re_r/\|e_r\|$, the corresponding solution $U$ to system   (\ref{4.1}) 
satisfies \begin{align} &\|(U(t)  - u(t) , 
U'(t)  - u'(t) )\|_{(H^1_{\Gamma_0}(\Omega)\times L^2(\Omega))^N}  \\\leqslant &Me^{-\omega t}\|C_p(U_0, U_1)\|_{(H^1_{\Gamma_0}(\Omega)\times L^2(\Omega))^{N-p}},  \quad t\geqslant 0.\notag
\end{align}
\end{theoreme}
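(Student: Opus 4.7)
The plan is to reduce Theorem \ref{4.4} to the abstract results of Section \ref{s3} by identifying the PDE system \eqref{4.1} with the abstract system \eqref{1.2} and invoking Theorem \ref{3.14} and Theorem \ref{3.16} directly.

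First, I would fix the functional framework by taking $V = H^1_{\Gamma_0}(\Omega)$, $H = L^2(\Omega)$, and $V'$ its dual. The operator $L: V \to V'$ is defined by $\langle Lu, \phi\rangle = \int_\Omega \nabla u\cdot\nabla\phi\, dx$, which is nothing but the duality mapping associated with the Dirichlet energy norm on $H^1_{\Gamma_0}(\Omega)$; by the Poincar\'e inequality (using mes$(\Gamma_0)>0$) this norm is equivalent to the full $H^1$-norm, so $L$ is an isomorphism. The boundary damping operator is $\langle g v, \phi\rangle = \int_{\Gamma_1} v\phi\, d\Gamma$, which is linear, symmetric, and continuous from $V$ into $V'$ by the trace theorem and a compact embedding argument. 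The variational formulation \eqref{4.2} obtained from multiplying \eqref{4.1} by test functions in $H^1_{\Gamma_0}(\Omega)$ and integrating by parts then translates into exactly the abstract equation \eqref{4.3}, namely $U'' + \mathcal L U + AU + D\mathcal G U' = 0$, which is \eqref{1.2}.

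Next, I would verify the key hypothesis of Theorem \ref{3.14}: the scalar equation $u'' + Lu + gu' = 0$, which is precisely the weak form of the scalar wave problem \eqref{4.0} with Dirichlet boundary condition on $\Gamma_0$ and dissipative Neumann condition $\partial_\nu u + u' = 0$ on $\Gamma_1$, is uniformly exponentially stable in $V\times H$. This is a classical result, available for instance in the works of Lagnese, Lasiecka--Triggiani, and Bardos--Lebeau--Rauch cited in the paper (\cite{Asmp1993,lasiecka,lebau}), under the assumed geometric conditions. No new argument is needed here; I would simply quote it.

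With this in hand, the hypotheses of Theorem \ref{3.14} are satisfied ($A$ is $C_p$-compatible, $D$ is strongly $C_p$-compatible with rank$(R)=N-p$, and the scalar equation is uniformly stable), so the uniform synchronization by $p$-groups in $(V\times H)^N$ follows immediately. For the second part of the statement, I invoke Theorem \ref{3.16}: there exist $u_1,\dots,u_p$ in $C^0(\mathbb R^+,V)\cap C^1(\mathbb R^+,H)$ satisfying the abstract conservative system \eqref{3.20} with coefficients $\beta_{rs}$ given by \eqref{3.19}, such that $u = \sum_{r=1}^p u_r e_r/\|e_r\|$ obeys the decay estimate \eqref{3.18}. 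Rewriting \eqref{3.20} in the PDE language under our identification of $L$ and $g$ (noting that $De_r=0$ eliminates the damping term, and the Neumann trace comes out to $0$ on $\Gamma_1$ precisely because $g u_r$ is absent), I obtain exactly the stated system for $u_r$ with homogeneous Neumann condition on $\Gamma_1$ and Dirichlet condition on $\Gamma_0$, and the stated initial data. Substituting the abstract norm estimate gives the announced decay in $(H^1_{\Gamma_0}(\Omega)\times L^2(\Omega))^N$.

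I do not anticipate a serious obstacle beyond bookkeeping: the main points to be careful about are (i) confirming that the boundary-trace operator $g$ really is compact-symmetric and fits the abstract template (a standard application of the trace theorem and Rellich's compactness), and (ii) translating the abstract conservative system \eqref{3.20} back into the PDE boundary conditions for $u_r$, where one must notice that the vanishing of $De_r$ removes the boundary dissipation and leaves a pure homogeneous Neumann condition on $\Gamma_1$. Everything else is a direct citation of Theorems \ref{3.14} and \ref{3.16}.
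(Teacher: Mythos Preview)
Your proposal is correct and follows essentially the same route as the paper: the paper simply identifies system \eqref{4.1} with the abstract form \eqref{1.2} via the variational formulation \eqref{4.2}--\eqref{4.3}, cites the known uniform stability of the scalar problem \eqref{4.0}, and then states that Theorem \ref{4.4} follows immediately from Theorems \ref{3.14} and \ref{3.16}. Your write-up is in fact more detailed than the paper's (which gives no proof beyond the sentence preceding the theorem), and your observations about verifying the abstract hypotheses on $g$ and recovering the homogeneous Neumann condition on $\Gamma_1$ from $De_r=0$ are exactly the bookkeeping the paper leaves implicit.
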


\medskip 

\subsection{Wave equations with locally distributed feedback} 
Let $\Omega\subset \mathbb R^n$ denote a bounded domain with smooth boundary $\Gamma$.  Let  $\omega\subset \Omega$  denote the damped domain.   

Let $a$ be a smooth  function  such that
\begin{equation}a(x)\geqslant 0, \quad \forall x\in \Omega \quad\hbox{and}  \quad a(x)\geqslant a_0>0,\quad \forall x\in \omega.\label{4.5}\end{equation}

Consider the uniform stability of the  following  locally damped  scalar  wave  system \begin{equation}\left\lbrace
\begin{array}{ll}
u''-\Delta u  +au'=  0 & \mbox{in } \rr^+\times
\Omega,\\
u=0& \mbox{on }
\rr^+\times \Gamma.
\end{array}
\right.\label{4.5b}
\end{equation}

This is a very challenge and promising  issue. There is a large amount of literatures that we will comment briefly.
The uniform  decay was first established  by multipliers in \cite{haraux}  as  $\omega$ is a neighbourhood of the boundary.  
Later, the result was generalized  in  \cite{zuazua1}  to semi-linear case.    
When $\Omega$ is a compact Riemann manifold without boundary and $\omega$ satisfies  the  geometric optic condition, the uniform stability was established  by  a micro-local approach in  \cite{rauch}.  

Now, consider  the  following  system   of locally damped wave equations:
\begin{equation}\left\lbrace
\begin{array}{ll}
U''-\Delta U + AU   + aDU'=  0 & \mbox{in }  \rr^+\times
\Omega,\\
U=0&\mbox{on }
\rr^+\times \Gamma,
\label {4.6}
\end{array}
\right.
\end{equation}
where $A$ and $D$ are symmetric and semi-positive definite matrices with constant elements. Multiplying  system   (\ref {4.6}) by $\Phi \in  H^1_0(\Omega)$ and integrating by parts, we get the following variational formulation:
\begin{equation}
\int_{\Omega}(\!(U'', \Phi)\!)dx +\int_{\Omega}(\!(\nabla U,\nabla \Phi)\!)dx +\int_{\Omega} (\!(AU, \Phi)\!) dx+\int_{\Omega} a(\!(DU', \Phi)\!)d\Gamma=0.\label {4.7}
\end{equation}
Let $L$ and $ g $  be  the linear continuous  mappings   from $H^1_0(\Omega)$ into $H^{-1}(\Omega)$, defined  by
\begin{equation}
\langle Lu, \phi\rangle = \int_\Omega \nabla u\cdot \nabla \phi dx\quad\hbox{and}\quad 
\langle  g  v, \phi\rangle =\int_\Omega av \phi dx,
 \end{equation}
 respectively.  Then the variational problem (\ref {4.7})  can be rewritten as  
\begin{equation}
U''+\mathcal LU +AU+D\mathcal GU'=0.\label{4.9}
\end{equation}

Then, applying Theorem \ref{3.14} and Theorem \ref{3.16},  we have
\begin{theoreme}  Assume that  the damped domain $\omega\subset \Omega$ contains  a neighbourhood of the whole boundary $\Gamma$.  Assume furthermore that $A$ satisfies  the condition  of $C_p$-compatibility  \eqref{3.4} and $D$  the condition  of strong $C_p$-compatibility \eqref{3.6} with  rank$(R) = N-p.$  Then   system    (\ref{4.9}) is uniformly synchronizable by $p$-groups in the space $(H^1_0(\Omega)\times L^2(\Omega))^N$. 

Moreover, for any 
given initial data $(U_0, U_1)\in (H^1_0(\Omega)\times L^2(\Omega))^N$,  consider the problem \begin{equation}\left\lbrace
\begin{array}{ll}
u_r''-\Delta u_r +\sum_{s=1}^p\beta_{rs}u_s=  0 & \mbox{in } \rr^+\times
\Omega,\\
 u_r=0 &\mbox{on }
\rr^+\times \Gamma,\\
t=0:\quad u_r= (\!(U_0, e_r)\!)/\|e_r\|, \quad  u_r'= (\!(U_0, e_r)\!)/\|e_r\| &\mbox{in }
 \Omega
\end{array}
\right.
\end{equation}
for $r=1, \cdots, p$,  and  the coefficients $\beta_{rs}$ are given by  \eqref{3.19}.
Then, setting $u=\sum_{r=1}^pu_re_r/\|e_r\|$, the corresponding solution $U$ to system   (\ref{4.9}) 
satisfies \begin{align}&\|(U(t)  - u(t), 
U' (t) - u'(t))\|_{(H^1(\Omega)\times L^2(\Omega))^N}  \\ \leqslant &Me^{-\omega t}\|C_p(U_0, U_1)\|_{(H^1(\Omega)\times L^2(\Omega))^{N-p}}, \quad t\geqslant 0.\notag\end{align}
\end{theoreme}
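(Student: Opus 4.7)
The plan is to reduce system \eqref{4.6} to the abstract framework of Section \ref{s3} and then apply Theorem \ref{3.14} and Theorem \ref{3.16} directly; the only analytic input required beyond those two abstract results is the uniform stability of the scalar locally damped wave equation \eqref{4.5b}.

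First I would cast \eqref{4.6} in the form \eqref{1.2}. Take $V = H^1_0(\Omega)$ and $H = L^2(\Omega)$, so that $V \subset H \subset V'$ with dense compact embeddings, with $L\colon V \to V'$ the duality mapping $\langle Lu,\phi\rangle = \int_\Omega \nabla u\cdot\nabla\phi\,dx$ (the realization of $-\Delta$ with homogeneous Dirichlet data), and $g\colon V\to V'$ the symmetric continuous operator defined by $\langle gv,\phi\rangle = \int_\Omega a v\phi\,dx$. The variational formulation \eqref{4.7} is then exactly \eqref{4.9}, which is the abstract system \eqref{1.2}. The hypotheses on $A$ and $D$ ($C_p$-compatibility \eqref{3.4} and strong $C_p$-compatibility \eqref{3.6} with $\mathrm{rank}(R)=N-p$) are exactly those required by Theorem \ref{3.14}.

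Next I would verify the only non-structural hypothesis of Theorem \ref{3.14}, namely the uniform exponential stability of the scalar equation \eqref{4.5b} in $V \times H = H^1_0(\Omega)\times L^2(\Omega)$. Since the damping region $\omega$ contains a full neighbourhood of the boundary $\Gamma$ and $a \geq a_0 > 0$ on $\omega$, this is the classical multiplier estimate of Haraux \cite{haraux}. Feeding this into Theorem \ref{3.14} immediately yields the uniform synchronization by $p$-groups in $(H^1_0(\Omega)\times L^2(\Omega))^N$, which is the first assertion.

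For the second assertion I would invoke Theorem \ref{3.16}. It produces functions $u_1,\dots,u_p \in C^0(\mathbb R^+,V)\cap C^1(\mathbb R^+,H)$ such that, with $u = \sum_{r=1}^p u_r e_r/\|e_r\|$, the exponential estimate \eqref{3.18} holds, which is precisely the decay inequality stated. Moreover, each $u_r$ solves the conservative reduced system \eqref{3.20} with coefficients $\beta_{rs}$ given by \eqref{3.19}; strong $C_p$-compatibility gives $De_r=0$, so no damping term survives. Translating $Lu_r$ back into $-\Delta u_r$ with homogeneous Dirichlet boundary condition and unwinding the initial data $(\!(U_0,e_r)\!)/\|e_r\|$, $(\!(U_1,e_r)\!)/\|e_r\|$ yields exactly the Cauchy problem written in the statement. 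The only genuinely analytic step is the uniform scalar stability, which is off-the-shelf under the stated geometric hypothesis on $\omega$, so no additional obstacle is expected.
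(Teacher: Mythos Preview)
Your proposal is correct and follows exactly the approach the paper takes: the paper simply writes ``Then, applying Theorem \ref{3.14} and Theorem \ref{3.16}, we have'' before stating the theorem, with the abstract reformulation \eqref{4.9} and the uniform stability of the scalar equation \eqref{4.5b} (via the multiplier method of \cite{haraux} under the neighbourhood-of-the-boundary hypothesis) supplying the hypotheses of those two abstract results. Your write-up is in fact more detailed than the paper's own treatment.
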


\medskip

\subsection{Kirchhoff plate equations with locally distributed feedback}  In this sub-section $\Omega$ is a bounded domain in $\mathbb R^2$, occupied by an elastic thin plate.  We refer  to \cite{lagnese} for the stabilization of  linear models.

Let $a$ be a smooth and non-negative function such that \eqref{4.5} holds. Assume that $\omega$ contains a neighbourhood of the whole boundary $\Gamma$.  Then,   the  following system of  plate equation  
 \begin{equation}\left\lbrace
\begin{array}{ll}
u''+\Delta^2 u  +a u'=  0&   \mbox{in }
\rr^+ \times \Omega,\\
u=\partial_\nu u =0&\mbox{on } 
\rr^+\times \Gamma
\label{4.10}
\end{array}
\right.
\end{equation}
 is  uniformly  stable  in $H^2_0(\Omega)\times L^2(\Omega)$ (see \cite{lagnese} for details). 
 
Consider the following  system  :
 \begin{equation}\left\lbrace
\begin{array}{ll}
U''+\Delta^2 U + AU  +  a DU'=  0&   \mbox{in }
\rr^+ \times \Omega,\\
U=\partial_\nu U =0&\mbox{on } 
\rr^+\times \Gamma,
\label{4.11}
\end{array}
\right.
\end{equation}
where $A$ and $D$ are symmetric and semi-positive definite matrices with constant elements.  
Multiplying  system   (\ref{4.11}) by $\Phi\in H^2_0(\Omega)$ and integrating by parts, we get the following variational formulation:\begin{equation}
\int_{\Omega}(\!(U'', \Phi )\!)dx  + \int_{\Omega} (\!(\Delta U,  \Delta\Phi)\!) dx+\int_{\Omega} (\!(AU, \Phi)\!) dx+
\int_{\Omega}a(\!(DU', \Phi)\!)dx=0.\label {4.12}
\end{equation}
Let $L$ and $  g $  be  defined  by
\begin{equation}
\langle Lu, \phi\rangle = 
\int_{\Omega} \Delta u \Delta\phi dx\quad\hbox{and}\quad  
\langle  g  v, \phi\rangle=
\int_{\Omega}av\phi dx,
\end{equation}respectively.  (\ref{4.12}) can be interpreted as
\begin{equation}
U''+\mathcal  LU +AU+D\mathcal GU'=0. 
\end{equation}
Then,  applying Theorem \ref{3.14} and Theorem \ref{3.16},  we have

\begin{theoreme}
Assume that $A$ satisfies  the condition  of $C_p$-compatibility  \eqref{3.4} and $D$  the condition  of strong $C_p$-compatibility \eqref{3.6} with  rank$(R) = N-p.$ 
Then  system   (\ref{4.11}) is  uniformly  synchronizable by $p$-groups in $(H^2_0(\Omega)\times L^2(\Omega))^N$.

Moreover, for any 
given initial data $(U_0, U_1)\in (H^2_0(\Omega)\times L^2(\Omega))^N$, consider the problem \begin{equation}\left\lbrace
\begin{array}{ll}
u_r'' +\Delta^2 u_r +\sum_{s=1}^p\beta_{rs}u_s=  0 & \mbox{in } \rr^+\times
\Omega,\\
 u_r = \partial_\nu u_r=0 &\mbox{on }
\rr^+\times \Gamma,\\
t=0:\quad u_r= (\!(U_0, e_r)\!)/\|e_r\|, \quad  u_r'= (\!(U_0, e_r)\!)/\|e_r\|&\mbox{in }
 \Omega
\end{array}
\right.
\end{equation}
for $r=1, \cdots, p$,  and the coefficients $\beta_{rs}$ are given by  \eqref{3.19}.
Then, setting $u=\sum_{r=1}^pu_re_r/\|e_r\|$, the corresponding solution $U$ to system   (\ref{4.11}) 
satisfies \begin{align} &\|(U(t)   - u(t) , 
U' (t)  - u'(t) )\|_{(H^2(\Omega)\times L^2(\Omega))^N}  \\ \leqslant &Me^{-\omega t}\|C_p(U_0, U_1)\|_{(H^2(\Omega)\times L^2(\Omega))^{N-p}},  \quad t\geqslant 0.\notag
\end{align}
\end{theoreme}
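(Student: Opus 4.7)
The plan is to recognize the Kirchhoff plate system \eqref{4.11} as a concrete instance of the abstract second order evolution system \eqref{1.2} and then invoke the already established Theorems \ref{3.14} and \ref{3.16}. Concretely, I would set $V = H^2_0(\Omega)$ and $H = L^2(\Omega)$, so that $V \subset H \subset V'$ with dense and compact embeddings. The duality map $L : V \to V'$ and the symmetric continuous operator $g : V \to V'$ are defined as in the subsection by $\langle Lu,\phi\rangle = \int_\Omega \Delta u\,\Delta\phi\,dx$ and $\langle gv,\phi\rangle = \int_\Omega av\phi\,dx$. The variational formulation \eqref{4.12} then exhibits \eqref{4.11} as exactly the abstract system \eqref{1.2} in $(V\times H)^N$. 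The hypotheses on $A$ and $D$ are assumed verbatim in the statement, so the abstract framework applies without further check.

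Next I would verify that the uniform stability hypothesis of Theorem \ref{3.14} is met by the scalar equation, namely that
\begin{equation*}
u'' + Lu + gu' = 0
\end{equation*}
is uniformly stable in $V\times H$. But this is precisely the scalar plate equation \eqref{4.10} with locally distributed damping $a$ whose support contains a neighborhood of $\Gamma$, which is uniformly stable in $H^2_0(\Omega)\times L^2(\Omega)$ by the classical result of Lagnese quoted in the paper. Applying Theorem \ref{3.14} gives the uniform synchronization by $p$-groups of \eqref{4.11}, and Theorem \ref{3.16} supplies constants $M\geqslant 1$ and $\omega>0$ and an asymptotic profile $u=\sum_{r=1}^p u_r e_r/\|e_r\|$ satisfying the stated decay estimate in $(V\times H)^N$.

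It remains to identify the conservative system satisfied by $u_1,\dots,u_p$ with the concrete problem in the statement. From \eqref{3.20} each $u_r$ solves $u_r'' + L u_r + \sum_{s=1}^p \beta_{rs} u_s = 0$ with the initial data $u_r(0) = (\!(U_0,e_r)\!)/\|e_r\|$ and $u_r'(0) = (\!(U_1,e_r)\!)/\|e_r\|$, and the coefficients $\beta_{rs}$ are those of \eqref{3.19}. In the present realization, $L u_r = \Delta^2 u_r$ in the sense of distributions, while the domain of $L$ in $V = H^2_0(\Omega)$ automatically encodes the clamped boundary conditions $u_r = \partial_\nu u_r = 0$ on $\Gamma$, which is exactly the plate problem stated for each $u_r$ in the theorem.

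The only mildly subtle point, which I would regard as the main (though entirely formal) obstacle, is the bookkeeping that translates the abstract identity between the duality mapping $L$ and the boundary-value operator $\Delta^2$ with clamped conditions, and the correct transcription of the initial data; there is a harmless typo $(\!(U_0,e_r)\!)$ for $u_r'(0)$ in the stated system that should read $(\!(U_1,e_r)\!)$, but otherwise the plan is a direct specialization and no new analytic input beyond the scalar stability result is required.
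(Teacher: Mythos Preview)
Your proposal is correct and follows exactly the paper's own approach: the theorem is stated immediately after the sentence ``Then, applying Theorem \ref{3.14} and Theorem \ref{3.16}, we have'', and your write-up simply makes that invocation explicit by identifying $V=H^2_0(\Omega)$, $H=L^2(\Omega)$, $L$, $g$, citing the uniform stability of the scalar plate equation \eqref{4.10}, and transcribing \eqref{3.20} into the clamped-plate system. Your observation about the typo in the initial velocity is also correct.
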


\begin{rem}  The above examples are classic and illustrate the applications of the abstract theory.  In fact,  Theorems 3.8 and 3.9  are also applicable for many other models, such as system of wave equations with viscoelastic (Kelvin-Voigt) damping,  system of Kirchhoff plate equations with boundary shear force and bending moment damping etc.

\end{rem}
\vskip 1cm

\section{Perspective comments.} \label{s5} Up to now, we have started the work on a simplified model with only one damping. 
Many related problems can be considered later. 

\medskip

(i) By the definition  of uniform synchronization by $p$-groups:
 \begin{equation} \|C_p(U(t), U'(t))\|_{( V  \times H )^{N-p} }\leqslant   Me^{-\omega t}\|C_p(U_0, U_1)\|_{( V  \times H )^{N-p} }, \quad t\geqslant 0,\label{5.0} 
 \end{equation}
if $C_p(U_0,  U_1)=(0, 0)$, then 
 \begin{equation}C_pU(t)\equiv 0, \quad  t\geqslant 0. \end{equation} 
Thus, for any given synchronized initial data, the solution 
 is always  synchronized.  This simplifies much the study on the 
 necessity of the conditions of $C_p$-compatibility given in Theorem \ref{3.12}. 
 
 A more natural definition of uniform synchronization by $p$-groups should  be given by 
 \begin{equation}\|C_p(U(t), U'(t))\|_{( V  \times H )^{N-p} }\leqslant   Me^{-\omega t}\|(U_0, U_1)\|_{( V  \times H )^N }, \quad t\geqslant 0.\label{5.1}
 \end{equation}
In this case, the  solution is not  automatically synchronized  even for the synchronized initial data. The  situation will be chaotic and  presents  certainly many interesting questions. 

\medskip

(ii) Instead of the uniform decay rate  given by \eqref{5.0}, we can  consider the polynomial  decay rate as 
\begin{equation}\|C_p(U(t), U'(t))\|_{( V  \times H )^{N-p} } = O( (1+ t)^{-\delta}),  \quad t\geqslant 0,
\label{5.2}
 \end{equation} with a positive power $\delta$.  We refer to \cite{Rao2020, Rao2019} and the references therein for the recent progress on the polynomial stability of indirectly damped wave equations.   

\medskip

(iii) We may consider a  system  with several damping  of different types: 
\begin{equation}
U'' + \mathcal L U    +AU  + D_1\mathcal G_1U'  +  D_2\mathcal G_2U' =  0, \label{5.3}\end{equation}
 where $\mathcal G_1$ and $\mathcal G_2$  can be internal and boundary damping  for wave equations, and bending moment and shear  force damping  for plate equations, respectively. Many related questions can be asked, for example:
 
(a) Let $D= (D_1, D_2)$ be the composite  damping matrix.  Is  Kalman's rank condition on $(A, D)$ still  sufficient  for the  asymptotic stability as what has been done in \cite{rao2}?
 
(b) Is the condition rank$(D) = N$  still sufficient  for the  uniform stability as we have 
done in the present work? 

The main difficulty  comes from the interaction of the numerous  matrices
$A, D_1, D_2$,   somewhat like for coupled Robin problem in \cite{luxing}.  
The key idea is to 
separate them as the coupling terms  are compact, so more regularity  seems to be necessary. 
 
We do not have any answer yet for each  question,  but the first attempt already shows  some interesting  results  for developing  the research  in these directions. 

\vskip1cm

{\bf Acknowledgement } This work is supported by  
National Natural  Sciences  Foundation of China under Grant 11831011.

\vskip 1cm

\end{document}